\documentclass[11pt,a4paper]{article}

\usepackage{authblk}
\usepackage{tabularx,booktabs}
\usepackage{fullpage}
\usepackage{graphicx}
\usepackage{lipsum}
\usepackage[utf8]{inputenc}
\usepackage[margin=1.0in]{geometry}
\usepackage[hidelinks]{hyperref}
\usepackage[T1]{fontenc}
\usepackage[utf8]{inputenc}
\usepackage{amsmath,amssymb, amsthm}
\usepackage{bm}
\usepackage{algorithm}
\usepackage{algorithmic}
\usepackage{placeins}
\usepackage{dsfont}
\usepackage{url}
\usepackage{svg}
\usepackage{graphics}
\usepackage[export]{adjustbox}
\usepackage{todonotes}
\usepackage{soul}
\usepackage{bbding}
\usepackage{dsfont}
\usepackage[labelfont=bf]{caption}

\usepackage{algorithm}
\usepackage{algorithmic}
\usepackage{amsmath,amssymb}

\newcommand{\junk}[1]{}

\newtheorem{theorem}{Theorem}[section]
\newtheorem{corollary}{Corollary}[theorem]
\newtheorem{lemma}[theorem]{Lemma}
\newtheorem{proposition}[theorem]{Proposition}

\begin{document}
\thispagestyle{empty}

\title{Higher Accuracy Modular Data Assimilation for the Navier-Stokes Equations}

\author[]{
Troy Yang \footnote{Department of Mathematics, University of Pittsburgh, Pittsburgh, PA, 15260, USA; Email: try21@pitt.edu. Partially supported by NSF grant DMS 2410893.\\
2020 Mathematics Subject Classification. Primary 65M06; Secondary 76D05.\\
Key words and phrases. data assimilation, nudging, Navier-Stokes.}}

\date{}

\maketitle

\begin{abstract}
  This paper develops an accurate and effective combination of second order backward differentiation time discretization (BDF2) with modular, 2-step nudging-based data assimilation
\begin{align}
    \text{Forecast step: } &\frac{3\Tilde{v}^{n+2}-4v^{n+1}+v^n}{2\Delta t}+\Tilde{v}^{n+2} \cdot \nabla \Tilde{v}^{n+2} - \nu \Delta \Tilde{v}^{n+2} + \nabla q^{n+2}=f(x) \notag \\ &\nabla \cdot \Tilde{v}^{n+2} = 0 \notag \\
    \text{Analysis step: }
    &\frac{3v^{n+2}-3\Tilde{v}^{n+2}}{2\Delta t}-\chi I_H(u(t^{n+2})-v^{n+2})=0. \notag 
\end{align}
If $I_H=I_H^2$, the analysis step can be made explicit, taking the form
\begin{align}
    v^{n+2}=\Tilde{v}^{n+2}+\frac{2\Delta t\chi}{3+2\Delta t\chi}I_H(u^{n+2}-\Tilde{v}^{n+2}). \notag
\end{align}
This implies the analysis step has the stability property of an implicit step and lower complexity than an explicit analysis step. Stability and error estimates for the BDF2 scheme are presented along with their proofs. Numerical experiments are conducted to assess the performance of BDF2 modular assimilation algorithm. The results of the experiments support the conclusion that modular data assimilation has comparable accuracy to standard, fully coupled data assimilation while greatly reducing computational complexity and cost.
\end{abstract}

\section{Introduction}
  Data assimilation incorporates observations to produce more accurate solutions and extends the predictability horizon of high Reynolds flows \cite{Kalnay:2003}. Modular, 2-step nudging, inspired by the efficiency of Kalman filter algorithms, is a newly developed data assimilation method that splits standard nudging into a forecast step and an analysis step. Modular nudging reduces algorithmic complexity of standard nudging \cite{Fang:2025}. Previous work on modular nudging only considered the simple, over-diffused and inaccurate fully implicit method. Herein, because time accuracy is important, we extend and test the algorithm and analysis for the second order, BDF2 time discretization. Let $u$ denote the fluid velocity and $p$ the fluid pressure. We consider the internal $2d$ or $3d$ flow of an  incompressible viscous fluid in a domain $\Omega$ given by equations:

\numberwithin{equation}{section}

\begin{equation}
    u_t+u\cdot \nabla u -\nu \Delta u + \nabla p = f(x), \text{ and } \nabla \cdot u=0, \text{ in } \Omega, 0<t\leq T,
\end{equation}

\begin{equation}
    u=0 \text{ on } \partial \Omega \text{ and } u(x,0)=u_0(x).
\end{equation}

Here $H$ denotes the observation spacing, $I_H$ an interpolation operator, and $u_{obs}(x,t)=I_H u(x,t)$ observations of the flow. In this paper, we primarily consider the simplifying case when $I_H$ is $L^2$ projection. We let $v$ denote the approximate velocity. The boundary and initial conditions are $v=0$ on $\partial \Omega$ and $v(x,0)=v_0(x)$ with $v_0(x)\neq u_0(x)$. To present the ideas, the space discretization is temporarily suppressed. Pick $\chi>0$ and proceed by the forecast and analysis steps:
\begin{align}
    \text{Forecast step: } &\frac{3\Tilde{v}^{n+2}-4v^{n+1}+v^n}{2\Delta t}+\Tilde{v}^{n+2} \cdot \nabla \Tilde{v}^{n+2} - \nu \Delta \Tilde{v}^{n+2} + \nabla q^{n+2}=f(x) \\ &\nabla \cdot \Tilde{v}^{n+2} = 0 \\
    \text{Analysis step: }
    &\frac{3v^{n+2}-3\Tilde{v}^{n+2}}{2\Delta t}-\chi I_H(u(t^{n+2})-v^{n+2})=0.
\end{align}
In the forecast step, we apply BDF2 time discretization and compute an approxmiate solution $\Tilde{v}^{n+2}$. Given $\Tilde{v}^{n+2}$, we apply corrections to the computed solution by assimilating observations of the flow in the analysis step. By splitting standard nudging into two steps, we overcome algorithmic limitations and lower computational complexity \cite{Fang:2025}.

This paper is organized as follows. General notations, preliminaries, and lemmas are introduced in Section 2. Section 3 discusses how the analysis step of modular nudging can be written explicitly to reduce algorithmic complexity. In particular, we have that if $I_H=I_H^2$, the explicit analysis step takes the form
\begin{align}
    v^{n+2}=\Tilde{v}^{n+2}+\frac{2\Delta t\chi}{3+2\Delta t\chi}I_H(u^{n+2}-\Tilde{v}^{n+2}). \notag
\end{align}
In Section 4, a BDF2 modular, 2-step nudging data assimilation algorithm is presented along with its stability analysis. We prove both finite-time and long-time stability estimates. In Section 5, we present two error estimates for the BDF2 scheme and discuss differences in their proofs. For both estimates, the errors do not grow over time. Section 6 presents three numerical experiments. The numerical tests include cases where $I_H=I_H^2$, and the explicit formula for the analysis step is implemented. 
The first experiment confirms that the  temporal convergence rate for BDF2 modular, 2-step nudging is second order by comparing computed solution with analytical solution on a unit square domain. We also verify that modular, 2-step nudging is computationally cheaper than standard nudging by comparing CPU runtime for various time steps. The second experiment tests the accuracy of the solution for a 2d flow between offset cylinders at a higher Reynolds number without exact solution. The computed solution is compared with the referenced solution approximated using Direct Numerical Simulation (DNS). The third experiment is another accuracy test for the 2d pipe cavity flow problem. We conduct the numerical simulation using same inflow and outflow boundary conditions as well as the "do-nothing" outflow boundary condition. Conclusions and future directions follow in Section 7. Appendix A considers an alternate analysis step and makes remarks about the difficulties of proving operator commutativity encountered in the analysis.

\subsection{Related work}
Nudging was developed by Luenberger \cite{Luenberger:1964} in 1964 to produce more accurate solutions by incorporating noisy observational data. Standard nudging has been extensively studied by Azouani, Olson, and Titi \cite{Azouani:2014} (a landmark result of the area), Biswas and Price \cite{Biswas:2021}, Cao, Giorgini, Jolly, and Pakzad \cite{Cao:2022}, Larios, Rebholz, and Zerfas \cite{Larios:2019}. Its feasibility has been proven and tested for 2D and 3D applications in those papers (and many other papers), e.g., Hoke and Anthes \cite{Hoke:1976}, Lakshmivarahan and Lewis \cite{Lakshmivarahan:2013}, Kalnay \cite{Kalnay:2003}, Navon \cite{Navon:1998}, Stauffer and Bao \cite{Stauffer:1993}, and Zou, Navon and LeDimet \cite{Zou:1992}. To overcome the algorithmic difficulties of standard nudging and inspired by the computational efficiency of Kalman filter variants, modular, 2-step nudging with backward Euler time discretization scheme was recently developed by Cibik, Fang, and Layton \cite{Fang:2025}.

\section{Preliminaries}
 \textbf{Key assumption}: We assume $X^H\subset X:=(H_0^1(\Omega))^d$, $d=2,3$, is a finite element velocity space with mesh size $H$ such that
\begin{align}
    ||(I-I_H)w||\leq CH||\nabla w||, \forall w \in (H_0^1(\Omega))^d, d=2,3.
\end{align}
We denote the pressure space as $Q:=L_0^2(\Omega)$. Let $X_h\subset X, Q_h\subset Q$ be an inf-sup stable pair of discrete velocity-pressure spaces. We use the standard $L^2(\Omega)$ notation: $||\cdot||$ for norm and $(\cdot,\cdot)$ for inner product. The dual norm of $X$ is denoted by $||\cdot ||_{-1}$. Let $L=\text{diam}(\Omega)$. In X, we have the Poincar\'{e} inequality: there exists a constant $C_P$ the depends on the shape of $\Omega$ but not $L$ such that for any $u\in X$,
\begin{align}
    ||u||\leq C_PL ||\nabla u||.
\end{align}
For smooth functions $u,v,w$, we denote the trilinear form $b:X\times X\times X \rightarrow \mathbb{R}$ by
\begin{align}
    b(u,v,w)=\frac{1}{2}(u\cdot \nabla v, w)-\frac{1}{2}(u\cdot \nabla w, v).
\end{align}
We define the global velocity by $$U=\text{ess sup}_{\Omega \times (0,\infty)}||u(x,t)||.$$ The Taylor length scale is $$l=\Big(\text{ess sup}_{\Omega \times (0,\infty)}||\nabla u(x,t)||\Big)^{-1}U.$$

The lemmas presented below are useful for the stability and error analysis. Lemma 2.1 and lemma 2.2 are cited from Rong and Fiordilino on pages 4 and 5 respectively \cite{Rong:2018}.
\begin{lemma}
    For real numbers $a,b,c\in \mathbb{R}$, we have 
    \begin{align}
        2(3a-4b+c)a=a^2-b^2+(2a-b)^2-(2b-c)^2+(a-2b+c)^2.
    \end{align}
\end{lemma}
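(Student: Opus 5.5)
This identity is purely algebraic, so the plan is to expand both sides as quadratic forms in $a,b,c$ and compare coefficients of each monomial. The left side is immediate:
\begin{align}
2(3a-4b+c)a = 6a^2 - 8ab + 2ac. \notag
\end{align}
On the right side I will expand the three squares separately:
\begin{align}
(2a-b)^2 &= 4a^2 - 4ab + b^2, \notag\\
(2b-c)^2 &= 4b^2 - 4bc + c^2, \notag\\
(a-2b+c)^2 &= a^2 + 4b^2 + c^2 - 4ab + 2ac - 4bc. \notag
\end{align}

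Next I will sum $a^2 - b^2 + (2a-b)^2 - (2b-c)^2 + (a-2b+c)^2$ and collect terms monomial by monomial:
\begin{itemize}
\item $a^2$: $1+4+1 = 6$.
\item $b^2$: $-1+1-4+4 = 0$.
\item $c^2$: $-1+1 = 0$.
\item $ab$: $-4-4 = -8$.
\item $ac$: $2$.
\item $bc$: $+4-4 = 0$.
\end{itemize}
The right side therefore equals $6a^2 - 8ab + 2ac$, which matches the left side, and the identity holds for all real $a,b,c$.

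There is no genuine obstacle here; the only risk is a sign slip in the $b^2$ and $bc$ cancellations, which the coefficient table above guards against. It is worth recording the interpretation for later use. With $a=v^{n+2}$, $b=v^{n+1}$, $c=v^n$, the terms $a^2-b^2$ and $(2a-b)^2-(2b-c)^2$ telescope over time steps, giving the $G$-stability energy of BDF2, and $(a-2b+c)^2\ge 0$ is numerical dissipation. The same computation goes through verbatim with products replaced by $L^2$ inner products, which is the form in which the lemma will be applied in the stability analysis.
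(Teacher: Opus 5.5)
Your expansion is correct: all six monomial coefficients check out, and direct expansion is the standard verification. The paper gives no proof of its own and simply cites Rong and Fiordilino. Your closing remark is also worth keeping. The identity holds verbatim in any real inner product space, using only bilinearity and symmetry. That is the form actually needed, since the paper states the lemma for real numbers but applies it to $v^{n+2}, v^{n+1}, v^n$ and to $I_He^{n+2}, I_He^{n+1}, I_He^n$ in $L^2$.
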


\begin{lemma}
    If $g_t, g_{tt}, g_{ttt} \in L^2(0, T; H^r(\Omega))$, then we have
    \begin{align}
        \Bigg|\Bigg|\frac{3g^{n+1}-4g^n+g^{n-1}}{2\Delta t}-g_t(t^{n+1})\Bigg|\Bigg|_r^2\leq C \Delta t^3 \int_{t^{n-1}}^{t^{n+1}}||g_{ttt}||_r^2 dt.
    \end{align}
\end{lemma}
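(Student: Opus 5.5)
The plan is to use Taylor's theorem with integral remainder about the point $t^{n+1}$, applied pointwise in $x$ (equivalently, as a Bochner-integral identity in $H^r(\Omega)$). For $s\in[t^{n-1},t^{n+1}]$ I would write
\begin{align}
g(s)=g^{n+1}+(s-t^{n+1})g_t^{n+1}+\tfrac12(s-t^{n+1})^2g_{tt}^{n+1}+\tfrac12\int_{t^{n+1}}^{s}(s-\tau)^2 g_{ttt}(\tau)\,d\tau. \notag
\end{align}
This would be evaluated at $s=t^n=t^{n+1}-\Delta t$ and at $s=t^{n-1}=t^{n+1}-2\Delta t$, and the results substituted into $3g^{n+1}-4g^n+g^{n-1}$.

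The key algebraic check is that the BDF2 weights annihilate the low-order terms:
\begin{itemize}
\item the $g^{n+1}$ terms have coefficient $3-4+1=0$;
\item the $g_t^{n+1}$ terms give $-4(-\Delta t)+(-2\Delta t)=2\Delta t$, which after division by $2\Delta t$ exactly cancels $g_t(t^{n+1})$;
\item the $g_{tt}^{n+1}$ terms give $\tfrac12\big(-4\Delta t^2+4\Delta t^2\big)=0$.
\end{itemize}
Hence the consistency error is the remainder alone:
\begin{align}
\frac{3g^{n+1}-4g^n+g^{n-1}}{2\Delta t}-g_t(t^{n+1})
=\frac{1}{4\Delta t}\Big(-4\int_{t^{n+1}}^{t^n}(t^n-\tau)^2g_{ttt}\,d\tau+\int_{t^{n+1}}^{t^{n-1}}(t^{n-1}-\tau)^2g_{ttt}\,d\tau\Big). \notag
\end{align}

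Next I would bound each integral in $\|\cdot\|_r$ using Minkowski's integral inequality, which moves the norm inside the integral, and then Cauchy--Schwarz in $\tau$. For the second integral,
\begin{align}
\Big\|\int_{t^{n-1}}^{t^{n+1}}(\tau-t^{n-1})^2g_{ttt}\,d\tau\Big\|_r^2\le\Big(\int_{t^{n-1}}^{t^{n+1}}(\tau-t^{n-1})^4d\tau\Big)\int_{t^{n-1}}^{t^{n+1}}\|g_{ttt}\|_r^2d\tau=\tfrac{(2\Delta t)^5}{5}\int_{t^{n-1}}^{t^{n+1}}\|g_{ttt}\|_r^2d\tau. \notag
\end{align}
The first integral is handled the same way: it is bounded by $\tfrac{\Delta t^5}{5}\int_{t^n}^{t^{n+1}}\|g_{ttt}\|_r^2$, which is at most the same quantity over the larger interval. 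Combining the two with $(a+b)^2\le 2a^2+2b^2$ and multiplying by the prefactor $(4\Delta t)^{-2}$ gives $C\Delta t^{5-2}=C\Delta t^3$ times $\int_{t^{n-1}}^{t^{n+1}}\|g_{ttt}\|_r^2dt$, which is the claim.

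The only real subtlety is justifying the vector-valued Taylor formula under the stated regularity. Since $g_{ttt}\in L^2(0,T;H^r)$, the function $g$ lies in $W^{3,2}(0,T;H^r)$, so $g$, $g_t$ and $g_{tt}$ are absolutely continuous in $H^r$. The integral-remainder formula then holds, either by repeated use of the fundamental theorem of calculus for Bochner integrals or by density of smooth functions followed by a limit. Once that is in place, the rest is the routine coefficient bookkeeping above.
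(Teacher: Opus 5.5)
The paper gives no proof of this lemma (it is imported from Rong and Fiordilino). Your argument is correct and is the standard proof of this BDF2 truncation-error bound: Taylor expansion with integral remainder about $t^{n+1}$, cancellation of the $g$, $g_t$, $g_{tt}$ terms by the BDF2 weights, then Minkowski and Cauchy--Schwarz on the remainders. It even yields the explicit constant $C=6/5$.

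One small tightening: the hypothesis controls only $g_t,g_{tt},g_{ttt}$, not $g$ itself, so the claim $g\in W^{3,2}(0,T;H^r)$ is not quite justified. Apply the expansion instead to $g(s)-g(t^{n+1})=\int_{t^{n+1}}^{s}g_t\,d\tau$. This costs nothing, because $3-4+1=0$.
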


We also use the polarization identity extensively in the analysis.
\begin{lemma}{(Polarization identity)}
    For u, v $\in X$, we have 
    \begin{align}
        (u,v)=\frac{1}{2}(||u||^2+||v||^2-||u-v||^2).
    \end{align}
\end{lemma}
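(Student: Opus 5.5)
The plan is to expand the right-hand side by direct computation, using only bilinearity and symmetry of the real inner product $(\cdot,\cdot)$. The identity holds for every $u,v\in X$; in fact it holds in any real inner product space, so no property specific to $X$ is needed.

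First, I write $\|u-v\|^2=(u-v,u-v)$. Expanding by bilinearity gives $(u,u)-(u,v)-(v,u)+(v,v)$. By symmetry of the real inner product, $(v,u)=(u,v)$, so
\begin{align}
\|u-v\|^2=\|u\|^2-2(u,v)+\|v\|^2. \notag
\end{align}

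Next, I rearrange: moving $2(u,v)$ to the left and $\|u-v\|^2$ to the right gives
\begin{align}
2(u,v)=\|u\|^2+\|v\|^2-\|u-v\|^2. \notag
\end{align}
Dividing by $2$ yields the stated identity.

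There is no real obstacle here. The only point to note is that the argument relies on the inner product being real and symmetric. This is the case for the $L^2(\Omega)$ inner product on real-valued vector fields used throughout the paper, so the cross terms combine into $2(u,v)$ rather than $2\,\mathrm{Re}(u,v)$.
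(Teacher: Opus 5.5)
Your proof is correct: expanding $\|u-v\|^2$ by bilinearity and symmetry of the real $L^2$ inner product and rearranging is the standard argument. The paper states this identity without proof, as a standard fact, so your argument simply supplies the routine verification it omits.
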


\section{The analysis step written explicitly}
  
With $I_H$ a projection, running the analysis step in finite elements requires spatial discretization on a mesh of width $h\ll H$ for the equation
\begin{align}
    \frac{3v^{n+2}-3\Tilde{v}^{n+2}}{2\Delta t}-\chi I_H(u(t^{n+2})-v^{n+2})=0
\end{align}
and solving the corresponding linear system for the discrete approximation to $v^{n+2}$. We show, for the modified analysis step herein, that the analysis step can be written explicitly and requires only one explicit application of the operator $I_H$ provided that $I_H=I_H^2$.

When $I_H=I_H^2$, then $v^{n+2}$ can be computed using an explicit step
\begin{align}
    v^{n+2}=\Tilde{v}^{n+2}+\frac{2\Delta t\chi}{3+2\Delta t\chi}I_H(u^{n+2}-\Tilde{v}^{n+2}).
\end{align}
Cibik, Fang, and Layton \cite{Fang:2025} have shown that this explicit calculation reveals the implication that modular nudging with backward Euler time discretization has a lower complexity than an explicit Kalman filter step. We extend this result to BDF2 time discretization.

\begin{proposition}
    The analysis step satisfies
    \begin{align}
        v^{n+2}=\Tilde{v}^{n+2}+\frac{2\Delta t\chi}{3+2\Delta t\chi}I_H(u^{n+2}-\Tilde{v}^{n+2})+\frac{(2\Delta t\chi)^2}{3(3+2\Delta t\chi)}[I_H-I_H^2](u^{n+2}-v^{n+2}).
    \end{align}
\end{proposition}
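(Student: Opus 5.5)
The plan is a direct algebraic manipulation of the analysis step; no estimates are needed. Set $w:=u^{n+2}-v^{n+2}$ and $a:=\frac{2\Delta t\chi}{3}$. Multiplying the analysis step by $\frac{2\Delta t}{3}$ rewrites it as
\begin{align*}
v^{n+2}=\Tilde{v}^{n+2}+a\,I_H w .
\end{align*}
This is implicit, since $w$ contains $v^{n+2}$. The goal is to trade $I_H w$ for $I_H(u^{n+2}-\Tilde{v}^{n+2})$ and keep track of the defect that appears when $I_H\neq I_H^2$.

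First, subtract the displayed relation from $u^{n+2}$. This gives
\begin{align*}
u^{n+2}-\Tilde{v}^{n+2}=w+a\,I_H w .
\end{align*}
Applying the linear operator $I_H$ yields
\begin{align*}
I_H(u^{n+2}-\Tilde{v}^{n+2})=I_H w+a\,I_H^2 w .
\end{align*}
Next, let $k:=\frac{a}{1+a}=\frac{2\Delta t\chi}{3+2\Delta t\chi}$, and compute the difference between the exact update $a I_H w$ and the candidate explicit update $k I_H(u^{n+2}-\Tilde{v}^{n+2})$:
\begin{align*}
a\,I_H w-k\,I_H(u^{n+2}-\Tilde{v}^{n+2})=(a-k)\,I_H w-ka\,I_H^2 w .
\end{align*}

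The key step is the scalar identity $a-k=\frac{a^2}{1+a}=ka$. With it, the difference collapses to $ka\,[I_H-I_H^2]w$. Finally, check the coefficient:
\begin{align*}
ka=\frac{2\Delta t\chi}{3+2\Delta t\chi}\cdot\frac{2\Delta t\chi}{3}=\frac{(2\Delta t\chi)^2}{3(3+2\Delta t\chi)} .
\end{align*}
This gives exactly the stated formula.

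The only point needing care is the choice of $k$. It is forced by requiring the coefficient of $I_H w$ to match the coefficient of the $I_H^2 w$ term, so that the leftover is a multiple of $I_H-I_H^2$. The rest is bookkeeping that uses only the linearity of $I_H$. As a corollary, when $I_H=I_H^2$ the last term vanishes and we recover the explicit formula of Section 3.
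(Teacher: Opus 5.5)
Your proof is correct. Every step uses only the linearity of $I_H$, and the scalar identity $a-k=ka$ with $k=a/(1+a)$ is exactly what turns the leftover into $ka\,[I_H-I_H^2]w$ with $w=u^{n+2}-v^{n+2}$.

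The paper takes a different route. It rewrites the analysis step as $(3I+2\Delta t\chi I_H)v^{n+2}=(3I+2\Delta t\chi I_H)\tilde v^{n+2}+2\Delta t\chi I_H(u^{n+2}-\tilde v^{n+2})$. It then inverts the operator to get $v^{n+2}=\tilde v^{n+2}+a(I+aI_H)^{-1}I_H(u^{n+2}-\tilde v^{n+2})$, and splits $(I+aI_H)^{-1}I_H$ with an operator inversion identity.

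What that buys is a representation of $v^{n+2}$ in terms of known data only: a gain operator applied to the innovation $u^{n+2}-\tilde v^{n+2}$. It is valid for any $I_H$ for which $3I+2\Delta t\chi I_H$ is invertible, e.g.\ an $L^2$ projection.

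The cost is that the stated remainder acts on $u^{n+2}-v^{n+2}$ rather than on $u^{n+2}-\tilde v^{n+2}$. To reach it, the identity must retain a trailing inverse,
\[
(I+aI_H)^{-1}I_H=\tfrac{1}{1+a}I_H+\tfrac{a}{1+a}\,(I_H-I_H^2)(I+aI_H)^{-1},
\]
together with $(I+aI_H)^{-1}(u^{n+2}-\tilde v^{n+2})=u^{n+2}-v^{n+2}$. That last fact is precisely your second display. As printed, the paper's identity has $I-\frac{2\Delta t\chi}{3}I_H$ and drops the trailing factor.

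Your forward computation needs no inverses and no invertibility hypothesis, and it proves exactly the direction the proposition asserts. It is therefore the cleaner proof of this identity. What it does not provide is the explicit gain formula for general $I_H$.
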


\begin{proof}
    One can insert $v^{n+2}$ in (3.3) into (3.1) and immediately verify it satisfies (3.1). To explain the source of (3.3), we derive (3.3) as follows. We start with 
    \begin{align}
        3v^{n+2}+2\Delta t\chi I_Hv^{n+2}=3\Tilde{v}^{n+2}+2\Delta t\chi I_Hu^{n+2}.
    \end{align}
    We add and subtract $2\Delta t\chi I_H \Tilde{v}^{n+2}$ to RHS:
    \begin{align}
        (3I+2\Delta t\chi I_H)v^{n+2}=(3I+2\Delta t\chi I_H)\Tilde{v}^{n+2}+2\Delta t\chi I_H(u^{n+2}-\Tilde{v}^{n+2}).
    \end{align}
    We apply the operator $(3I+2\Delta t\chi I_H)^{-1}$ to both sides:
    \begin{align}
        v^{n+2}=\Tilde{v}^{n+2}+2\Delta t\chi(3I+2\Delta t\chi I_H)^{-1}I_H(u^{n+2}-\Tilde{v}^{n+2}).
    \end{align}
    This proves (3.2). When $(I-\frac{2\Delta t\chi}{3}I_H)^{-1}$ exists, the inversion identity holds:
    \begin{align}
        \Bigg(I-\frac{2\Delta t\chi}{3}I_H\Bigg)^{-1}I_H=\frac{1}{1+\frac{2\Delta t\chi}{3}}I_H+\frac{\frac{2\Delta t\chi}{3}}{1+\frac{2\Delta t\chi}{3}}(I_H-I_H^2).
    \end{align}
    Substituting identity into (3.6) and simplifying terms completes the proof of (3.3).
\end{proof}

\begin{corollary}
    If $I_H=I_H^2$, then
    \begin{align}
        v^{n+2}=\Tilde{v}^{n+2}+\frac{2\Delta t\chi}{3+2\Delta t\chi}I_H(u^{n+2}-\Tilde{v}^{n+2}).
    \end{align}
\end{corollary}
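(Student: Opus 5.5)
The corollary follows by specializing the Proposition to the case of an idempotent interpolant. I would also give a direct verification that does not rely on the Proposition's inversion identity.

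\textbf{Route via the Proposition.} The Proposition gives
\begin{align*}
v^{n+2}=\Tilde{v}^{n+2}+\frac{2\Delta t\chi}{3+2\Delta t\chi}I_H(u^{n+2}-\Tilde{v}^{n+2})+\frac{(2\Delta t\chi)^2}{3(3+2\Delta t\chi)}[I_H-I_H^2](u^{n+2}-v^{n+2}).
\end{align*}
If $I_H=I_H^2$, the operator $I_H-I_H^2$ is the zero operator. The last term therefore vanishes whatever $u^{n+2}-v^{n+2}$ is, and the stated formula remains. This step is immediate.

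\textbf{Direct verification.} Since the derivation of the Proposition used an inversion identity that needs $(3I+2\Delta t\chi I_H)$ to be invertible, I would check the corollary on its own terms.

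First, set $w=I_H(u^{n+2}-\Tilde{v}^{n+2})$ and $\alpha=\frac{2\Delta t\chi}{3+2\Delta t\chi}$, and define the candidate $v^{n+2}=\Tilde{v}^{n+2}+\alpha w$. Because $I_H^2=I_H$, we have $I_Hw=w$.

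Next, compute the nudging residual:
\begin{align*}
I_H(u^{n+2}-v^{n+2})=I_H(u^{n+2}-\Tilde{v}^{n+2})-\alpha I_Hw=(1-\alpha)w=\frac{3}{3+2\Delta t\chi}w.
\end{align*}

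Then substitute into the analysis step (3.1):
\begin{align*}
\frac{3\alpha w}{2\Delta t}-\chi\frac{3}{3+2\Delta t\chi}w=\frac{3\chi}{3+2\Delta t\chi}w-\frac{3\chi}{3+2\Delta t\chi}w=0.
\end{align*}
So the candidate satisfies the equation.

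\textbf{Uniqueness.} The linear map $3I+2\Delta t\chi I_H$ must be injective for the candidate to be the solution. For a projection $P=I_H$, the inverse is explicit:
\begin{align*}
(3I+2\Delta t\chi P)^{-1}=\tfrac13\Big(I-\alpha P\Big),
\end{align*}
which one checks using $P^2=P$. Hence the solution is unique and equals the formula above. Equivalently, if $I_H$ is the $L^2$ projection it is symmetric positive semidefinite, so $3I+2\Delta t\chi I_H$ is coercive.

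\textbf{Main point needing care.} The only step requiring any attention is uniqueness, that is, confirming that (3.1) determines $v^{n+2}$. Idempotence settles it through the explicit inverse, so there is no real obstacle.
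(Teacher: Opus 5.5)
Your proposal is correct, and your first route is exactly the paper's proof: take the Proposition's identity (3.3) and note that $I_H-I_H^2=0$ removes the last term. Your supplementary direct substitution (using $I_H w=w$ to get $I_H(u^{n+2}-v^{n+2})=(1-\alpha)w$) and the explicit inverse $(3I+2\Delta t\chi I_H)^{-1}=\tfrac13(I-\alpha I_H)$ are also correct, and they make the corollary self-contained rather than dependent on the paper's rather loose inversion-identity derivation of the Proposition.
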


\begin{proof}
    Take (3.3) and use the assumption that $I_H=I_H^2$.
\end{proof}
   
\section{Stability of BDF2 modular nudging}
  
Extending the algorithms in \cite{Fang:2025}, we propose the following modular, 2-step nudging-based data assimilation algorithm for approximating the solutions of (1.1).

\begin{algorithm}[H]
\caption{}\label{alg:cap1}
\begin{algorithmic}
\STATE \begin{align}
    \text{Forecast step: } 
    &\text{Given } v^n, v^{n+1} \in X_h, \text{ find } (\Tilde{v}^{n+2}, q^{n+2}) \in (X_h, Q_h) \text{ satisfying:} \notag \\ 
    &\Bigg(\frac{3\Tilde{v}^{n+2}-4v^{n+1}+v^n}{2\Delta t}, v_h\Bigg)+b(\Tilde{v}^{n+2}, \Tilde{v}^{n+2}, v_h) + \nu (\nabla \Tilde{v}^{n+2}, \nabla v_h) \\
    &- (q^{n+2}, \nabla \cdot v_h)=(f^{n+2},v_h) \text{ } \forall v_h \in X_h,  \notag \\ 
    &(\nabla \cdot \Tilde{v}^{n+2}, q_h) = 0 \text{ } \forall q_h \in Q_h.
\end{align}

\IF {no data at $t^{n+2}$}
    \STATE \begin{align}
        \text{proceed to next forecast step} \notag 
    \end{align}
\ELSE 
\STATE \begin{align}
    \text{Analysis step: }
    &\Bigg(\frac{3v^{n+2}-3\Tilde{v}^{n+2}}{2\Delta t}, v_h\Bigg) -\chi (I_H(u(t^{n+2})-v^{n+2}), v_h)=0 \text{ } \forall v_h \in X_h.
\end{align}
\ENDIF

\end{algorithmic}
\end{algorithm}

In this stability analysis, we focus on fully nonlinear convection term. Analysis with linearly implicit convection terms remains an open problem. Before presenting the stability estimate, we begin by proving a lemma essential for the stability analysis. As inspired by results developed by Rong and Fiordilino on page 4 of \cite{Rong:2018}, Lemma 4.1 shows two identities derived from algebraic manipulation of the analysis step in a similar manner.

\begin{lemma}
    Consider the analysis step in (4.3). Then the following identities hold:
    \begin{align}
        &||\Tilde{v}^{n+2}||^2=||v^{n+2}||^2+||\Tilde{v}^{n+2}-v^{n+2}||^2-\frac{4}{3}\Delta t\chi (I_H(u(t^{n+2})-v^{n+2}),v^{n+2}) \\
        & and \notag \\
        &\Bigg(\frac{3v^{n+2}-4v^{n+1}+v^n}{2\Delta t}, \Tilde{v}^{n+2}-v^{n+2}\Bigg)=-\frac{1}{3}\chi (I_H(u(t^{n+2})-v^{n+2}),3v^{n+2}-4v^{n+1}+v^n).
    \end{align}
\end{lemma}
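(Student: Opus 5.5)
Both identities are consequences of the analysis step (4.3) alone, tested against suitable functions in $X_h$. The forecast step is not needed. The only structural facts I use are that $v^n, v^{n+1}, v^{n+2}, \Tilde{v}^{n+2}$ all lie in $X_h$, and that (4.3) holds for every $v_h \in X_h$. The term $I_H(u(t^{n+2})-v^{n+2})$ never has to belong to $X_h$, because it only appears inside inner products with admissible test functions.

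For the first identity, I take $v_h = v^{n+2}$ in (4.3) and multiply by $2\Delta t$. This gives
\begin{align*}
3(v^{n+2}-\Tilde{v}^{n+2}, v^{n+2}) = 2\Delta t\chi \,(I_H(u(t^{n+2})-v^{n+2}), v^{n+2}).
\end{align*}
I then apply the polarization identity (Lemma 2.3) with $u = v^{n+2}-\Tilde{v}^{n+2}$ and $v = v^{n+2}$, so that $u - v = -\Tilde{v}^{n+2}$:
\begin{align*}
(v^{n+2}-\Tilde{v}^{n+2}, v^{n+2}) = \tfrac12\big(\|v^{n+2}-\Tilde{v}^{n+2}\|^2 + \|v^{n+2}\|^2 - \|\Tilde{v}^{n+2}\|^2\big).
\end{align*}
Substituting, multiplying by $2/3$, and solving for $\|\Tilde{v}^{n+2}\|^2$ yields exactly the first identity, including the coefficient $-\tfrac{4}{3}\Delta t\chi$.

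For the second identity, I take $v_h = 3v^{n+2}-4v^{n+1}+v^n$, which lies in $X_h$ because $X_h$ is a linear space. Rearranging (4.3) gives
\begin{align*}
(\Tilde{v}^{n+2}-v^{n+2}, v_h) = -\tfrac{2\Delta t\chi}{3}\,(I_H(u(t^{n+2})-v^{n+2}), v_h).
\end{align*}
Dividing by $2\Delta t$ and using symmetry of the $L^2$ inner product produces the left-hand side
\begin{align*}
\Big(\frac{3v^{n+2}-4v^{n+1}+v^n}{2\Delta t}, \Tilde{v}^{n+2}-v^{n+2}\Big)
\end{align*}
and the right-hand side $-\tfrac13\chi\,(I_H(u(t^{n+2})-v^{n+2}), 3v^{n+2}-4v^{n+1}+v^n)$, as claimed.

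I expect no genuine obstacle. The only points needing care are the sign bookkeeping in the polarization step and checking that each chosen test function is admissible in $X_h$.
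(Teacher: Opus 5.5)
Your proposal is correct and is essentially the paper's proof. The paper tests (4.3) with $v_h=\frac{4}{3}\Delta t\,v^{n+2}$ and with $v_h=\frac{1}{3}(3v^{n+2}-4v^{n+1}+v^n)$, which are scalar multiples of your test functions, and then applies the polarization identity and rearranges as you do; your signs and coefficients check out.
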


\begin{proof}
    Setting $v_h=\frac{4}{3}\Delta tv^{n+2}$ in (4.3), we have
    \begin{align}
        2||v^{n+2}||^2-2(\Tilde{v}^{n+2}, v^{n+2})-\frac{4}{3}\Delta t\chi (I_H(u(t^{n+2})-v^{n+2}),v^{n+2})=0.
    \end{align}
    We apply the polarization identity to the second term. Then
    \begin{align}
        &2||v^{n+2}||^2-2\Bigg(\frac{1}{2}||\Tilde{v}^{n+2}||^2+\frac{1}{2}||v^{n+2}||^2-\frac{1}{2}||\Tilde{v}^{n+2}-v^{n+2}||^2\Bigg) \notag \\
        &-\frac{4}{3}\Delta t\chi (I_H(u(t^{n+2})-v^{n+2}),v^{n+2})=0.
    \end{align}
    We have proven the first identity (4.4). For the second identity, we set $v_h=\frac{3v^{n+2}-4v^{n+1}+v^n}{3}$ in (4.3) and get
    \begin{align}
        \Bigg(\frac{3v^{n+2}-3\Tilde{v}^{n+2}}{2\Delta t}, \frac{3v^{n+2}-4v^{n+1}+v^n}{3}\Bigg) -\chi \Bigg(I_H(u(t^{n+2})-v^{n+2}), \frac{3v^{n+2}-4v^{n+1}+v^n}{3}\Bigg)=0.
    \end{align}
    Rearranging terms, we have
    \begin{align}
        \Bigg(\frac{3v^{n+2}-4v^{n+1}+v^n}{2\Delta t}, v^{n+2}-\Tilde{v}^{n+2}\Bigg) =\frac{1}{3}\chi \Bigg(I_H(u(t^{n+2})-v^{n+2}), 3v^{n+2}-4v^{n+1}+v^n\Bigg)
    \end{align}
    Multiplying both sides by $-1$ completes the proof.
\end{proof}

We prove stability in Theorem 4.2 next.

\begin{theorem}
    Consider the forecast step in (4.1) and the analysis step in (4.3). Assume $I_H$ is an $L^2$ projection. Then the following holds:
    \begin{align}
        &||v^N||^2+||2v^N-v^{N-1}||^2+\frac{2}{3}\Delta t\chi ||I_H(v^N-u(t^N))||^2 \notag \\
        &+\frac{2}{3}\Delta t\chi ||2I_H(v^N-u(t^N))-I_H(v^{N-1}-u(t^{N-1}))||^2 + \sum_{n=0}^{N-2}||v^{n+2}-2v^{n+1}+v^n||^2 \notag \\
        &+6\sum_{n=0}^{N-2}||\Tilde{v}^{n+2}-v^{n+2}||^2 + 3\Delta t\nu \sum_{n=0}^{N-2}||\nabla \Tilde{v}^{n+2}||^2 +\frac{4}{3}\Delta t\chi \sum_{n=0}^{N-2}||I_H(u(t^{n+2})-v^{n+2})||^2  \notag \\
        \leq& 4\Delta t\nu^{-1}\sum_{n=0}^{N-2}||f^{n+2}||_{-1}^2+2\Delta t\chi \sum_{n=0}^{N-2} ||I_Hu^{n+2}||^2 \\
        &+ \frac{2}{3}\Delta t\chi \sum_{n=0}^{N-2}||I_H(3u(t^{n+2})-4u(t^{n+1})+u(t^n))||^2 +||v^1||^2+||2v^1-v^0||^2 \notag \\
        &+\frac{2}{3}\Delta t\chi ||I_H(v^1-u(t^1))||^2+\frac{2}{3}\Delta t\chi ||2I_H(v^1-u(t^1))-I_H(v^0-u(t^0))||^2. \notag
    \end{align}
\end{theorem}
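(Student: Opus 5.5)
The plan is to test the forecast step (4.1) with $v_h=4\Delta t\,\Tilde{v}^{n+2}$, convert the BDF2 difference into telescoping quantities in $v$ using Lemma 4.1, and then handle the nudging term produced by the analysis step, again through the BDF2 identity of Lemma 2.1. Because $\Tilde{v}^{n+2}$ is discretely divergence-free, the pressure term drops, and $b(\Tilde{v}^{n+2},\Tilde{v}^{n+2},\Tilde{v}^{n+2})=0$ by skew-symmetry. This leaves
\begin{align}
2\big(3\Tilde{v}^{n+2}-4v^{n+1}+v^n,\Tilde{v}^{n+2}\big)+4\Delta t\nu\|\nabla\Tilde{v}^{n+2}\|^2=4\Delta t(f^{n+2},\Tilde{v}^{n+2}). \notag
\end{align}
I write $3\Tilde{v}^{n+2}-4v^{n+1}+v^n=(3v^{n+2}-4v^{n+1}+v^n)+3(\Tilde{v}^{n+2}-v^{n+2})$ and $\Tilde{v}^{n+2}=v^{n+2}+(\Tilde{v}^{n+2}-v^{n+2})$, then expand into four pieces, each handled separately:
\begin{itemize}
\item $2(3v^{n+2}-4v^{n+1}+v^n,v^{n+2})$ is treated by Lemma 2.1. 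It gives the telescoping terms $\|v^{n+2}\|^2+\|2v^{n+2}-v^{n+1}\|^2$ minus their lagged versions, plus the numerical dissipation $\|v^{n+2}-2v^{n+1}+v^n\|^2$.
\item $2(3v^{n+2}-4v^{n+1}+v^n,\Tilde{v}^{n+2}-v^{n+2})$ is converted by the second identity (4.5) into $-\tfrac{4}{3}\Delta t\chi\big(I_H(u^{n+2}-v^{n+2}),3v^{n+2}-4v^{n+1}+v^n\big)$.
\item $6(\Tilde{v}^{n+2}-v^{n+2},v^{n+2})$ is rewritten using the first identity (4.4), after a polarization step. This yields $3\|\Tilde{v}^{n+2}-v^{n+2}\|^2$ together with a nudging term $4\Delta t\chi(I_H(u^{n+2}-v^{n+2}),v^{n+2})$. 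Equivalently, test (4.3) with $v_h$ equal to a multiple of $v^{n+2}$.
\item $6\|\Tilde{v}^{n+2}-v^{n+2}\|^2$ appears directly.
\end{itemize}
The right-hand side is bounded by $4\Delta t\|f\|_{-1}\|\nabla\Tilde{v}\|\le \Delta t\nu\|\nabla\Tilde{v}\|^2+4\Delta t\nu^{-1}\|f\|_{-1}^2$. This leaves the $3\Delta t\nu$ viscous term.

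The key step, and the main obstacle, is the nudging terms. Set $e^k:=I_H(v^k-u(t^k))$ and use that $I_H$ is a self-adjoint $L^2$ projection, so $(I_H w,z)=(I_Hw,I_Hz)$. The combination $-\tfrac43\Delta t\chi(I_H(u^{n+2}-v^{n+2}),3v^{n+2}-4v^{n+1}+v^n)$ is rewritten by adding and subtracting $u$ at the three time levels:
\begin{align}
\tfrac43\Delta t\chi\,(e^{n+2},3e^{n+2}-4e^{n+1}+e^n)+\tfrac43\Delta t\chi\,\big(e^{n+2},I_H(3u^{n+2}-4u^{n+1}+u^n)\big). \notag
\end{align}
Lemma 2.1 applied to the first term produces $\tfrac23\Delta t\chi[\|e^{n+2}\|^2+\|2e^{n+2}-e^{n+1}\|^2-\text{lagged}]$ plus a nonnegative term. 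The second term is bounded by Young's inequality, giving $\tfrac23\Delta t\chi\|I_H(3u^{n+2}-4u^{n+1}+u^n)\|^2+\tfrac23\Delta t\chi\|e^{n+2}\|^2$.

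The remaining term $4\Delta t\chi(I_H(u^{n+2}-v^{n+2}),v^{n+2})$ is written with $v^{n+2}=-(u^{n+2}-v^{n+2})+u^{n+2}$. This gives $-4\Delta t\chi\|e^{n+2}\|^2+4\Delta t\chi(I_H(u-v),I_Hu)$, and Young bounds the latter by $2\Delta t\chi\|e^{n+2}\|^2+2\Delta t\chi\|I_Hu^{n+2}\|^2$. Collecting terms leaves a positive multiple of $\Delta t\chi\|e^{n+2}\|^2$; after the absorption I expect this to be $\tfrac43\Delta t\chi$, though the bookkeeping of signs and coefficients must be checked carefully.

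Finally, I sum from $n=0$ to $N-2$, telescope both the $v$-energy and the $e$-energy terms, and drop the nonnegative dissipation terms as appropriate. This gives the stated bound with initial data $v^0,v^1$.
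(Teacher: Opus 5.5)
Your route is the paper's proof. You test (4.1) with $\tilde v^{n+2}$ (scaled by $4\Delta t$ at the outset) and use Lemma 2.1 on the $v$-terms and (4.5) on the cross term. You then use the projection property to peel off $I_Hu$, apply Lemma 2.1 again to $e^k=I_H(v^k-u(t^k))$, and use Young's inequality with the same weights as the paper. Your four-piece expansion is a correct identity. The paper instead merges your third and fourth pieces into $3(\tilde v^{n+2}-v^{n+2},\tilde v^{n+2})$, polarizes, and substitutes (4.4). Its printed (4.12) has $3\tilde v^{n+2}$ in the last term where $3v^{n+2}$ is meant.

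Two bookkeeping claims are wrong as written, though the fixes are local.

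\textbf{First: the third piece.} It contributes no $\|\tilde v^{n+2}-v^{n+2}\|^2$ at all. Testing (4.3) with $v_h=4\Delta t\,v^{n+2}$ gives exactly $6(\tilde v^{n+2}-v^{n+2},v^{n+2})=-4\Delta t\chi(I_H(u^{n+2}-v^{n+2}),v^{n+2})$. The polarization-plus-(4.4) route agrees, because the $\pm3\|\tilde v^{n+2}-v^{n+2}\|^2$ cancel. So the coefficient $6$ in the theorem comes entirely from your fourth piece; your count would give $9$, which is not correct.

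\textbf{Second: the sign of the nudging term.} That term enters the left side with a minus sign. Substituting $v^{n+2}=-(u^{n+2}-v^{n+2})+u^{n+2}$ turns it into $+4\Delta t\chi\|e^{n+2}\|^2-4\Delta t\chi(I_H(u^{n+2}-v^{n+2}),I_Hu^{n+2})$ on the left. Young then leaves $+2\Delta t\chi\|e^{n+2}\|^2$, and the other Young step removes $\frac23\Delta t\chi\|e^{n+2}\|^2$, giving the $\frac43\Delta t\chi$ you anticipated. Taken literally, with $+4\Delta t\chi(I_H(u^{n+2}-v^{n+2}),v^{n+2})$ on the left as you wrote it, the net coefficient of $\|e^{n+2}\|^2$ on the left would be negative, and nothing could be absorbed.

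With both corrections, the per-step inequality is exactly the paper's (4.19), and summing from $n=0$ to $N-2$ finishes the proof.
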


\begin{proof}
    Set $v_h=\Tilde{v}^{n+2}$ in (4.1) and $q_h=q^{n+2}$ in (4.2). Adding these two equations gives
    \begin{align}
        &\Bigg(\frac{3\Tilde{v}^{n+2}-4v^{n+1}+v^n}{2\Delta t}, \Tilde{v}^{n+2}\Bigg) + \nu ||\nabla \Tilde{v}^{n+2}||^2 =(f^{n+2},\Tilde{v}^{n+2}).
    \end{align}
    We rewrite the first term as follows
    \begin{align}
        \Bigg(\frac{3\Tilde{v}^{n+2}-4v^{n+1}+v^n}{2\Delta t}, \Tilde{v}^{n+2}\Bigg)=
        &\Bigg(\frac{3v^{n+2}-4v^{n+1}+v^n}{2\Delta t}, v^{n+2}\Bigg) +\Bigg(\frac{3\Tilde{v}^{n+2}-3v^{n+2}}{2\Delta t}, \Tilde{v}^{n+2}\Bigg) \notag \\
        &+\Bigg(\frac{3\Tilde{v}^{n+2}-4v^{n+1}+v^n}{2\Delta t}, \Tilde{v}^{n+2}-v^{n+2}\Bigg).
    \end{align}
    For the first term on the RHS in (4.12), we apply Lemma 2.1. We apply the polarization identity to the second term, and we apply (4.5) of Lemma 4.1 to the third term. Then,
    \begin{align}
        &\frac{1}{4\Delta t}\Bigg[||v^{n+2}||^2-||v^{n+1}||^2+||2v^{n+2}-v^{n+1}||^2-||2v^{n+1}-v^n||^2+||v^{n+2}-2v^{n+1}+v^n||^2\Bigg] \notag \\
        &+ \frac{1}{3}\chi(I_H(v^{n+2}-u(t^{n+2})),3v^{n+2}-4v^{n+1}+v^n)+\frac{3}{4\Delta t}(||\Tilde{v}^{n+2}||^2-||v^{n+2}||^2+||\Tilde{v}^{n+2}-v^{n+2}||^2) \notag \\
        &+\nu ||\nabla \Tilde{v}^{n+2}||^2=(f^{n+2},\Tilde{v}^{n+2}). 
    \end{align}
    Multiplying (4.13) by $4\Delta t$ and using (4.4) of Lemma 4.1 yield
    \begin{align}
        &||v^{n+2}||^2-||v^{n+1}||^2+||2v^{n+2}-v^{n+1}||^2-||2v^{n+1}-v^n||^2+||v^{n+2}-2v^{n+1}+v^n||^2 \notag \\
        &+ \frac{4}{3}\Delta t\chi(I_H(v^{n+2}-u(t^{n+2})),3v^{n+2}-4v^{n+1}+v^n)+6||\Tilde{v}^{n+2}-v^{n+2}||^2  \\
        &-4\Delta t\chi(I_H(u(t^{n+2})-v^{n+2}),v^{n+2})+4\Delta t\nu ||\nabla \Tilde{v}^{n+2}||^2=4\Delta t(f^{n+2},\Tilde{v}^{n+2}). \notag
    \end{align}
    Since $I_H$ is an $L^2$ projection, we have
    \begin{align}
        4\Delta t\chi &(I_H(u(t^{n+2})-v^{n+2}),v^{n+2}) \notag \\
        &=4\Delta t\chi(I_H(u(t^{n+2})-v^{n+2}),I_Hv^{n+2})\notag \\
        &=4\Delta t\chi(I_H(u(t^{n+2})-v^{n+2}),I_H(v^{n+2}-u^{n+2}+u^{n+2})) \notag \\
        &=-4\Delta t\chi||I_H(u(t^{n+2})-v^{n+2})||^2+4\Delta t\chi(I_H(u(t^{n+2})-v^{n+2}),I_Hu^{n+2}).
    \end{align}
    We bound the second term on the RHS using Cauchy–Schwarz and Young's inequalities as follows:
    \begin{align}
        4\Delta t\chi(I_H(u(t^{n+2})-v^{n+2}),I_Hu^{n+2}) &\leq 4\Delta t\chi ||I_H(u(t^{n+2})-v^{n+2})||^2 \cdot ||I_Hu^{n+2}||^2 \notag \\
        &\leq 4\Delta t\chi \Bigg(\frac{1}{2}||I_H(u(t^{n+2})-v^{n+2})||^2+\frac{1}{2}||I_Hu^{n+2}||^2\Bigg) \notag \\
        &=2\Delta t\chi ||I_H(u(t^{n+2})-v^{n+2})||^2+2\Delta t\chi ||I_Hu^{n+2}||^2.
    \end{align}
    Similarly, we bound the body force term as follows:
    \begin{align}
        4\Delta t(f^{n+2},\Tilde{v}^{n+2})
        &\leq 4\Delta t||f^{n+2}||_{-1}\cdot ||\nabla \Tilde{v}^{n+2}|| \leq \frac{4\Delta t}{\nu}||f^{n+2}||_{-1}^2+\Delta t\nu||\nabla \Tilde{v}^{n+2}||^2.
    \end{align}
    Lastly, we bound the sixth term in (4.14) as follows:
    \begin{align}
        &\frac{4}{3}\Delta t\chi(I_H(v^{n+2}-u(t^{n+2})),3v^{n+2}-4v^{n+1}+v^n) \notag \\
        &=\frac{4}{3}\Delta t\chi(I_H(v^{n+2}-u(t^{n+2})),I_H(3v^{n+2}-4v^{n+1}+v^n)) \notag \\
        &=\frac{2}{3}\Delta t\chi \cdot 2(I_H(v^{n+2}-u(t^{n+2})),3I_H(v^{n+2}-u(t^{n+2}))-4I_H(v^{n+1}-u(t^{n+1}))+I_H(v^n-u(t^n))) \notag \\
        &+\frac{4}{3}\Delta t\chi \cdot 2(I_H(v^{n+2}-u(t^{n+2})),3I_Hu(t^{n+2})-4I_Hu(t^{n+1})+I_Hu(t^n)) .
    \end{align}
    We apply Lemma 2.1 to the first term on the RHS in (4.18) and bound the second term by Cauchy-Schwarz-Young. Rearranging terms, we have
    \begin{align}
        &||v^{n+2}||^2-||v^{n+1}||^2+||2v^{n+2}-v^{n+1}||^2-||2v^{n+1}-v^n||^2+||v^{n+2}-2v^{n+1}+v^n||^2 \notag \\
        &+\frac{2}{3}\Delta t\chi\Bigg[||I_H(v^{n+2}-u(t^{n+2}))||^2-||I_H(v^{n+1}-u(t^{n+1}))||^2 \notag \\
        &+||2I_H(v^{n+2}-u(t^{n+2}))-I_H(v^{n+1}-u(t^{n+1}))||^2-||2I_H(v^{n+1}-u(t^{n+1}))-I_H(v^n-u(t^n))||^2 \notag \\
        &+||I_H(v^{n+2}-u(t^{n+2}))-2I_H(v^{n+1}-u(t^{n+1}))+I_H(v^n-u(t^n))||^2\Bigg]  \\
        &+6||\Tilde{v}^{n+2}-v^{n+2}||^2+3\Delta t\nu ||\nabla \Tilde{v}^{n+2}||^2 + \frac{4}{3}\Delta t\chi ||I_H(u(t^{n+2})-v^{n+2})||^2 \notag \\
        &\leq \frac{4\Delta t}{\nu}||f^{n+2}||_{-1}^2+2\Delta t\chi ||I_Hu^{n+2}||^2+\frac{2}{3}\Delta t\chi ||I_H(3u(t^{n+2})-4u(t^{n+1})+u(t^n))||^2. \notag
    \end{align}
    Summing (4.19) from $n=0$ to $N-2$ and rearranging terms completes the proof.
\end{proof}

Remark: In Theorem 4.2, the assimilated solution's estimate decomposes as follows.
\begin{align}
    \text{Discrete energy: } E^N=&||v^N||^2+||2v^N-v^{N-1}||^2+\frac{2}{3}\Delta t\chi ||I_H(v^N-u(t^N))||^2 \notag \\
    &+\frac{2}{3}\Delta t\chi ||2I_H(v^N-u(t^N))-I_H(v^{N-1}-u(t^{N-1}))||^2 \notag \\
    \text{Numerical dissipation: } D_{\text{n}}^N=&\sum_{n=0}^{N-2}||v^{n+2}-2v^{n+1}+v^n||^2 +6\sum_{n=0}^{N-2}||\Tilde{v}^{n+2}-v^{n+2}||^2 \notag \\
    \text{Viscous dissipation: } D_{\text{v}}^N=& 3\Delta t\nu \sum_{n=0}^{N-2}||\nabla \Tilde{v}^{n+2}||^2 \notag \\
    \text{Dissipation of data mismatch: }D_{\text{d}}^N=&\frac{4}{3}\Delta t\chi \sum_{n=0}^{N-2}||I_H(u(t^{n+2})-v^{n+2})||^2 \notag
\end{align}

In the last proof, we proved a finite-time stability estimate for the BDF2 algorithm. The stability estimate shows the natural energy at time $N$ and the numerical, discrete temporal, viscous, and observable error dissipation are bounded for all time by external forcing, observation data, and initial energy. Next, we analyze the long-time stability of the algorithm.

\begin{corollary}
    Consider the forecast step in (4.1) and the analysis step in (4.3). Assume $I_H$ is an $L^2$ projection. Then the time-averaged velocity is stable:
    \begin{align}
        \limsup_{N\rightarrow \infty} \frac{1}{N} \sum_{n=0}^{N-2} \Bigg[&||v^{n+2}-2v^{n+1}+v^n||^2 +6||\Tilde{v}^{n+2}-v^{n+2}||^2 + 3\Delta t\nu||\nabla \Tilde{v}^{n+2}||^2 \notag \\
        &+\frac{4}{3}\Delta t\chi||I_H(u(t^{n+2})-v^{n+2})||^2 \Bigg] \notag \\
        \leq \limsup_{N\rightarrow \infty}\frac{1}{N} \sum_{n=0}^{N-2} \Bigg[&4\Delta t\nu^{-1}||f^{n+2}||_{-1}^2+2\Delta t\chi||I_Hu^{n+2}||^2 \\
        &+ \frac{2}{3}\Delta t\chi||I_H(3u(t^{n+2})-4u(t^{n+1})+u(t^n))||^2\Bigg]. \notag
    \end{align}
\end{corollary}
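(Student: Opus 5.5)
The plan is to obtain the corollary directly from the finite-time estimate of Theorem 4.2: divide by $N$ and pass to the $\limsup$. The terms on the left-hand side of Theorem 4.2 are of two kinds. The first kind is the discrete energy $E^N$, which is nonnegative. The second kind is the three dissipation sums $D_{\text{n}}^N$, $D_{\text{v}}^N$, $D_{\text{d}}^N$ from the Remark, which are exactly the summands appearing on the left of the corollary. So the first step is to discard $E^N\ge 0$, which leaves
\begin{align*}
D_{\text{n}}^N+D_{\text{v}}^N+D_{\text{d}}^N \leq F^N + E^1 .
\end{align*}
Here $F^N$ denotes the sum over $n=0,\dots,N-2$ of the forcing and observation terms $4\Delta t\nu^{-1}\|f^{n+2}\|_{-1}^2+2\Delta t\chi\|I_Hu^{n+2}\|^2+\frac{2}{3}\Delta t\chi\|I_H(3u(t^{n+2})-4u(t^{n+1})+u(t^n))\|^2$. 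The quantity $E^1$ collects the initial data: $\|v^1\|^2+\|2v^1-v^0\|^2$ together with the two $\frac23\Delta t\chi$ terms involving $I_H(v^1-u(t^1))$ and $I_H(v^0-u(t^0))$.

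Next I will multiply this inequality by $1/N$. The key observation is that $E^1$ depends only on $v^0$, $v^1$, $u(t^0)$, $u(t^1)$, $\Delta t$ and $\chi$, and is independent of $N$. Since it is finite, $E^1/N\to 0$ as $N\to\infty$. Taking $\limsup_{N\to\infty}$ of both sides and using subadditivity, $\limsup(a_N+b_N)\le\limsup a_N+\lim b_N$ when $b_N$ converges, then gives exactly the stated inequality.

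The only point needing care is to justify that the right-hand side is meaningful. If the $\limsup$ of $F^N/N$ is $+\infty$, the statement holds trivially. Otherwise it is finite under the natural assumptions $f\in L^\infty(0,\infty;H^{-1})$ and $u\in L^\infty(0,\infty;L^2)$, the latter guaranteed by the finiteness of the global velocity $U$. Indeed, $I_H$ is an $L^2$ projection, so $\|I_Hw\|\le\|w\|$, and each summand of $F^N$ is then uniformly bounded by a constant times $\Delta t$. I expect no real obstacle here; the work was done in Theorem 4.2, and this step is only the standard Ces\`aro-type averaging argument.
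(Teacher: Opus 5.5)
Your proposal is correct and takes the same approach as the paper: start from the finite-time estimate (4.10) of Theorem 4.2, divide by $N$, and let $N\to\infty$, so that the $N$-independent initial-data terms $E^1/N$ vanish. Your explicit step of discarding the nonnegative energy $E^N$ is also the cleaner justification. The paper instead says loosely that the ``individual velocity terms approach zero,'' and for $E^N/N$ that would require $E^N=o(N)$, which (4.10) alone does not give.
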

\begin{proof}
    We start with (4.10). We divide both sides of inequality by $N$ and take limit as $N\rightarrow \infty$. Individual velocity terms approach zero for large $N$. Thus, we have summation terms remaining on both sides.
\end{proof}
By this corollary, we know the time-averaged dissipation and stabilization terms are bounded uniformly in time by time-averaged external forcing and data terms.

\section{Error Analysis}
  In this section, we prove two error estimates for the 2-step nudging with the analysis step in (1.5). One estimate imposes no condition on the time step size, while another estimate imposes a small time step size condition with restrictions on $\chi$. Before we prove these estimates, we introduce a few lemmas necessary for proving the error estimates.

\begin{proposition}
    Consider the analysis step in (4.3). Then the following identities holds:
    \begin{equation}
        ||e^{n+2}||^2+||e^{n+2}-\tilde{e}^{n+2}||^2+\frac{4}{3}\Delta t\chi ||I_He^{n+2}||^2= ||\tilde{e}^{n+2}||^2
    \end{equation}
    and
    \begin{equation}
        ||I_He^{n+2}||^2+||I_H(e^{n+2}-\tilde{e}^{n+2})||^2+\frac{4}{3}\Delta t\chi ||I_He^{n+2}||^2= ||I_H\tilde{e}^{n+2}||^2.
    \end{equation}
\end{proposition}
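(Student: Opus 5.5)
We prove both identities by testing an error form of the analysis step with suitable functions and then applying the polarization identity (Lemma 2.3), exactly as in the proof of Lemma 4.1. Write $u^{n+2}=u(t^{n+2})$ and use the error notation
\[
e^{n+2}=u^{n+2}-v^{n+2}, \qquad \tilde e^{n+2}=u^{n+2}-\tilde v^{n+2}.
\]
Since $3v^{n+2}-3\tilde v^{n+2}=-(3e^{n+2}-3\tilde e^{n+2})$, the analysis step (1.5)/(4.3) becomes the error equation
\begin{align*}
\frac{3e^{n+2}-3\tilde e^{n+2}}{2\Delta t}+\chi I_H e^{n+2}=0 .
\end{align*}

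\emph{Step 1 (first identity).} Take the inner product of the error equation with $\tfrac{4}{3}\Delta t\, e^{n+2}$. This gives
\[
2\|e^{n+2}\|^2-2(\tilde e^{n+2},e^{n+2})+\tfrac{4}{3}\Delta t\chi\,(I_He^{n+2},e^{n+2})=0 .
\]
Because $I_H$ is an $L^2$ projection, it is self-adjoint and idempotent, so $(I_He^{n+2},e^{n+2})=\|I_He^{n+2}\|^2$. Now replace $2(\tilde e^{n+2},e^{n+2})$ using Lemma 2.3:
\[
2(\tilde e^{n+2},e^{n+2})=\|\tilde e^{n+2}\|^2+\|e^{n+2}\|^2-\|e^{n+2}-\tilde e^{n+2}\|^2 .
\]
Collecting terms yields the first identity directly.

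\emph{Step 2 (second identity).} Take the inner product with $\tfrac{4}{3}\Delta t\, I_He^{n+2}$ instead. Self-adjointness and idempotence give
\[
(e^{n+2},I_He^{n+2})=\|I_He^{n+2}\|^2, \qquad (\tilde e^{n+2},I_He^{n+2})=(I_H\tilde e^{n+2},I_He^{n+2}), \qquad (I_He^{n+2},I_He^{n+2})=\|I_He^{n+2}\|^2 .
\]
Applying Lemma 2.3 to the pair $I_H\tilde e^{n+2}$, $I_He^{n+2}$ and using linearity, $I_He^{n+2}-I_H\tilde e^{n+2}=I_H(e^{n+2}-\tilde e^{n+2})$, we obtain the second identity by the same rearrangement as in Step 1. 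An equivalent route is to apply $I_H$ to the error equation and repeat Step 1 with $I_He$ and $I_H\tilde e$ in place of $e$ and $\tilde e$.

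\emph{Main obstacle.} The delicate point is justifying these test functions. In the discrete form (4.3) the equation holds only against $v_h\in X_h$, and $e^{n+2}$ involves $u(t^{n+2})\notin X_h$. I would handle this by reading the analysis step as an identity in $L^2$, as in (1.5) and Section 3, where $v^{n+2}$ is given explicitly by Corollary 3.1.1. The error equation then holds pointwise and may be tested with any $L^2$ function. Alternatively, if one insists on the discrete form, assume $X^H\subset X_h$ so that $I_H$ maps into $X_h$. In that case both $I_He^{n+2}$ and $e^{n+2}-\tilde e^{n+2}=\tilde v^{n+2}-v^{n+2}\in X_h$ are admissible, and the needed terms can be recovered from them. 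I would state this interpretation explicitly before the computation.
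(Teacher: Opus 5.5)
Your proof is correct and follows the paper's argument. You form the error equation $\frac{3e^{n+2}-3\tilde e^{n+2}}{2\Delta t}+\chi I_He^{n+2}=0$ and test it with $e^{n+2}$ (resp.\ $I_He^{n+2}$). You then use that the $L^2$ projection is self-adjoint and idempotent and apply the polarization identity; the extra factor $\tfrac{4}{3}\Delta t$ is only cosmetic. Your caveat about whether $e^{n+2}$ is an admissible test function addresses a point the paper skips by working with the strong form (1.5). Your fallback $X^H\subset X_h$ settles it cleanly: the residual of (4.3) then lies in $X_h$ and is orthogonal to $X_h$, so it vanishes and the error equation holds in $L^2$.
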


\begin{proof}
    At time $t^{n+2}$, the true solution $u$ satisfies
    \begin{align}
        \frac{3u^{n+2}-3u^{n+2}}{2\Delta t} -\chi I_H(u(t^{n+2})-u^{n+2})=0.
    \end{align}
    Subtract (1.5) from (5.3) and take inner product with $e^{n+2}$ gives
    \begin{align}
        \Bigg(\frac{3e^{n+2}-3\Tilde{e}^{n+2}}{2\Delta t}, e^{n+2}\Bigg)
        &+\chi (I_He^{n+2}, e^{n+2})=0.
    \end{align}
    For the first term on LHS of (5.4), we apply polarization identity. For the second term, we apply $I_H$ to right side of the inner product. Then
    \begin{align}
        \frac{3}{2\Delta t}\Bigg(\frac{1}{2}||e^{n+2}||^2-\frac{1}{2}||\tilde{e}^{n+2}||^2+\frac{1}{2}||e^{n+2}-\tilde{e}^{n+2}||^2\Bigg)+\chi||I_H e^{n+2}||^2=0.
    \end{align}
    Rearranging terms completes the proof of (5.1). To prove (5.2), take inner product with $I_He^{n+2}$ and apply $I_H$ to the left side of the first term. Then
    \begin{align}
        \frac{3}{2\Delta t}(I_H(e^{n+2}-\Tilde{e}^{n+2}), I_He^{n+2})
        &+\chi (I_He^{n+2}, I_He^{n+2})=0.
    \end{align}
    We apply polarization identity to the first term. We have
    \begin{align}
        \frac{3}{2\Delta t}\Bigg(\frac{1}{2}||I_He^{n+2}||^2-\frac{1}{2}||I_H\tilde{e}^{n+2}||^2+\frac{1}{2}||I_H(e^{n+2}-\tilde{e}^{n+2})||^2\Bigg)+\chi||I_H e^{n+2}||^2=0.
    \end{align}
    Rearranging terms completes the proof of (5.2).
\end{proof}

Remark: From (5.1) of Proposition 5.1, we know $||e^{n+2}||^2=||\tilde{e}^{n+2}||^2$ only if $||e^{n+2}-\tilde{e}^{n+2}||^2=||I_He^{n+2}||^2=0$. With (5.2) of Proposition 5.1, we know $||I_He^{n+2}||^2=||I_H\tilde{e}^{n+2}||^2$ only when $||I_H(e^{n+2}-\tilde{e}^{n+2})||^2=0$ and $(1+\frac{4}{3}\Delta t \chi)||I_He^{n+2}||^2=||I_H\tilde{e}^{n+2}||^2$.

\begin{corollary}
    Consider the analysis step in (4.3). Then the following identity holds:
    \begin{align}
        ||e^{n+2}||^2< ||\tilde{e}^{n+2}||^2
    \end{align} with equality when $e^{n+2}=\tilde{e}^{n+2}$ and $I_He^{n+2}=0.$
\end{corollary}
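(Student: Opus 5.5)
The plan is to read the corollary directly off identity (5.1) of Proposition 5.1,
\begin{align*}
\|\tilde e^{n+2}\|^2-\|e^{n+2}\|^2=\|e^{n+2}-\tilde e^{n+2}\|^2+\frac{4}{3}\Delta t\chi\|I_He^{n+2}\|^2 .
\end{align*}
Since $\chi>0$ and $\Delta t>0$, both terms on the right are nonnegative. Hence $\|e^{n+2}\|^2\le\|\tilde e^{n+2}\|^2$ always holds. Equality holds exactly when both terms vanish, that is, when $e^{n+2}=\tilde e^{n+2}$ and $I_He^{n+2}=0$. In every other case the inequality is strict. This settles the nonstrict inequality together with the stated equality characterization.

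It remains to relate the two equality conditions to each other. I would subtract the true-solution relation (5.3) from the analysis step (1.5) without taking an inner product. This gives the pointwise identity
\begin{align*}
\frac{3}{2\Delta t}\bigl(e^{n+2}-\tilde e^{n+2}\bigr)=-\chi I_He^{n+2}.
\end{align*}
So $e^{n+2}=\tilde e^{n+2}$ holds if and only if $I_He^{n+2}=0$, and the two conditions in the statement coincide. The corollary can therefore be stated as: $\|e^{n+2}\|<\|\tilde e^{n+2}\|$ unless $I_He^{n+2}=0$, in which case the analysis step leaves the error unchanged and equality holds.

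The main obstacle is interpretive rather than technical. As literally written, the strict ``$<$'' cannot hold unconditionally, because the equality case is attainable: take $\tilde e^{n+2}$ with $I_H\tilde e^{n+2}=0$. I would therefore prove the statement in the form ``$\le$, with equality iff $e^{n+2}=\tilde e^{n+2}$ and $I_He^{n+2}=0$, and strict otherwise''. Using the pointwise identity, I would also note that this equality case is equivalent to $I_H\tilde e^{n+2}=0$: if $I_He^{n+2}=0$ then $e^{n+2}=\tilde e^{n+2}$, so $I_H\tilde e^{n+2}=0$. Conversely, if $I_H\tilde e^{n+2}=0$, then (5.2) gives $(1+\tfrac43\Delta t\chi)\|I_He^{n+2}\|^2+\|I_H(e^{n+2}-\tilde e^{n+2})\|^2=0$, so $I_He^{n+2}=0$. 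Thus the analysis step strictly reduces the $L^2$ error whenever the observations detect any error in the forecast.
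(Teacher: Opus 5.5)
Your proposal is correct and follows the paper's route: you read the corollary off identity (5.1), where both extra terms are nonnegative and vanish together only when $e^{n+2}=\tilde e^{n+2}$ and $I_He^{n+2}=0$. Your reformulation as $\le$, strict exactly when $I_He^{n+2}\neq 0$, is the right reading, because the paper's ``all terms are positive'' only justifies nonnegativity; your observation via $\frac{3}{2\Delta t}(e^{n+2}-\tilde e^{n+2})=-\chi I_He^{n+2}$ that the two equality conditions coincide is a correct bonus.
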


\begin{proof}
    Take (5.1). Since all terms are positive and there are more terms on LHS, we have $||e^{n+2}||^2<||\tilde{e}^{n+2}||^2$. To obtain equality, we need $||e^{n+2}-\tilde{e}^{n+2}||^2=0$ and $||I_He^{n+2}||^2=0$. This only happens when $e^{n+2}=\tilde{e}^{n+2}$ and $I_He^{n+2}=0$.
\end{proof}

\begin{corollary}
    Consider the analysis step in (4.3). Then the following identity holds:
    \begin{align}
        ||I_He^{n+2}||^2< ||I_H\tilde{e}^{n+2}||^2
    \end{align}
    with equality when $I_He^{n+2}=I_H\tilde{e}^{n+2}=0$.
\end{corollary}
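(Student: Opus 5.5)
The plan is to mirror the proof of Corollary 5.2, starting from identity (5.2) of Proposition 5.1 instead of (5.1). Identity (5.2) reads
\begin{align}
    ||I_H\tilde{e}^{n+2}||^2=||I_He^{n+2}||^2+||I_H(e^{n+2}-\tilde{e}^{n+2})||^2+\frac{4}{3}\Delta t\chi ||I_He^{n+2}||^2. \notag
\end{align}
Every term on the right-hand side is nonnegative, and $\chi>0$, $\Delta t>0$. Dropping the middle term therefore gives the sharper bound
\begin{align}
    \Big(1+\frac{4}{3}\Delta t\chi\Big)||I_He^{n+2}||^2\leq ||I_H\tilde{e}^{n+2}||^2. \notag
\end{align}
This already yields $||I_He^{n+2}||^2\leq ||I_H\tilde{e}^{n+2}||^2$.

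To get strict inequality, suppose $||I_He^{n+2}||\neq 0$. Then the factor $1+\frac{4}{3}\Delta t\chi>1$ makes the inequality strict. If instead $I_He^{n+2}=0$, the identity reduces to $||I_H\tilde{e}^{n+2}||^2=||I_H(e^{n+2}-\tilde{e}^{n+2})||^2=||I_H\tilde{e}^{n+2}||^2$. This is consistent, and we get strictness $0<||I_H\tilde{e}^{n+2}||^2$ unless $I_H\tilde{e}^{n+2}=0$.

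So equality holds exactly when both $I_He^{n+2}=0$ and $I_H\tilde{e}^{n+2}=0$. Conversely, if both vanish, then the middle term $I_H(e^{n+2}-\tilde{e}^{n+2})$ vanishes by linearity of $I_H$, and (5.2) holds trivially with equality. This is the stated equality case.

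An independent check is available from the analysis step itself. Applying $I_H$ to the error equation behind (5.4), and using that $I_H$ is a projection so $I_H^2=I_H$, gives $3I_He^{n+2}-3I_H\tilde{e}^{n+2}+2\Delta t\chi I_He^{n+2}=0$. Hence
\begin{align}
    I_He^{n+2}=\frac{3}{3+2\Delta t\chi}I_H\tilde{e}^{n+2}, \notag
\end{align}
which makes both the strict contraction and the equality case immediate. This is consistent with Corollary 3.2.

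The only delicate point is the phrasing of strictness. The inequality is strict precisely when $I_H\tilde{e}^{n+2}\neq 0$; when it is zero, the two sides are equal and both vanish. I would state it this way rather than claiming strictness unconditionally, since an unconditional claim of this kind was the imprecision in Corollary 5.2.
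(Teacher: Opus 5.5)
Your proposal is correct and follows the paper's argument: you start from identity (5.2), use that $\|I_H(e^{n+2}-\tilde{e}^{n+2})\|^2$ and $\frac{4}{3}\Delta t\chi\|I_He^{n+2}\|^2$ are nonnegative, and note that equality forces both to vanish, so $I_He^{n+2}=I_H\tilde{e}^{n+2}=0$. Your case split states strictness more precisely than the paper does (strict exactly when $I_H\tilde{e}^{n+2}\neq 0$), and your relation $I_He^{n+2}=\frac{3}{3+2\Delta t\chi}I_H\tilde{e}^{n+2}$ is the unsquared form of Lemma 5.2, which also shows that $I_He^{n+2}=0$ and $I_H\tilde{e}^{n+2}=0$ always occur together.
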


\begin{proof}
    Take (5.2). Since all terms are positive and there are more terms on LHS, we have $||I_He^{n+2}||^2<||I_H\tilde{e}^{n+2}||^2$. To obtain equality, we need $||I_H(e^{n+2}-\tilde{e}^{n+2})||^2=0$ and $||I_He^{n+2}||^2=0$. This only happens when $I_He^{n+2}=I_H\tilde{e}^{n+2}=0$.
\end{proof}

\begin{lemma}
    Consider the analysis step in (4.3). If $I_H$ is $L^2$ orthogonal projection, then the following identity holds:
    \begin{equation}
        ||I_H\tilde{e}^{n+2}||^2=\Bigg(1+\frac{2}{3}\Delta t\chi\Bigg)^2||I_He^{n+2}||^2.
    \end{equation}
\end{lemma}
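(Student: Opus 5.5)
The plan is to work directly with the error form of the analysis step, as in the proof of Proposition 5.1. There we subtracted (1.5) from the trivially satisfied identity (5.3) for the true solution. With $e^{n+2}=u^{n+2}-v^{n+2}$ and $\tilde e^{n+2}=u^{n+2}-\tilde v^{n+2}$, this gives the pointwise (operator) equation
\begin{align*}
\frac{3}{2\Delta t}\big(e^{n+2}-\tilde e^{n+2}\big)+\chi I_H e^{n+2}=0,
\qquad\text{i.e.}\qquad
e^{n+2}+\tfrac{2}{3}\Delta t\chi\, I_He^{n+2}=\tilde e^{n+2}.
\end{align*}
Strictly, (4.3) is posed weakly on $X_h$. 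However, all quantities involved ($e^{n+2}$ interpreted in $X_h$, and $I_He^{n+2}\in X^H\subset X_h$ in the setting of the paper) lie in the test space, so the weak equation is equivalent to this strong identity. I would state this reduction explicitly at the start.

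Next I apply $I_H$ to both sides. Since $I_H$ is an $L^2$ orthogonal projection, it is linear and idempotent, so $I_H^2=I_H$. This yields
\begin{align*}
\Big(1+\tfrac{2}{3}\Delta t\chi\Big) I_He^{n+2}=I_H\tilde e^{n+2}.
\end{align*}
Taking $L^2$ norms and squaring then gives the claimed identity immediately.

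As a consistency check, I would also derive the identity from (5.2) of Proposition 5.1. By the relation above, $I_H(e^{n+2}-\tilde e^{n+2})=-\tfrac{2}{3}\Delta t\chi I_He^{n+2}$. Hence $\|I_H(e^{n+2}-\tilde e^{n+2})\|^2=\tfrac{4}{9}\Delta t^2\chi^2\|I_He^{n+2}\|^2$. The left side of (5.2) then becomes $(1+\tfrac{4}{3}\Delta t\chi+\tfrac{4}{9}\Delta t^2\chi^2)\|I_He^{n+2}\|^2=(1+\tfrac{2}{3}\Delta t\chi)^2\|I_He^{n+2}\|^2$, which matches.

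The only real obstacle is justifying the passage from the variational analysis step to the strong identity, and the use of $I_H^2=I_H$. I would handle the first point by testing (4.3) (in error form) with $v_h=I_H\phi$ for arbitrary $\phi$. Using self-adjointness of the orthogonal projection, this gives $(I_H(e^{n+2}-\tilde e^{n+2})+\tfrac{2}{3}\Delta t\chi I_He^{n+2},\phi)=0$. This tests only the projected equation, which is exactly what is needed, and it avoids any question about whether $e^{n+2}$ itself lies in $X_h$.
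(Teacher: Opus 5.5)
Your proof is correct, but it takes a more direct route than the paper. The paper never isolates the projected vector identity. It tests the error equation separately with $v_h=I_He^{n+2}$ and with $v_h=I_H\tilde e^{n+2}$, and uses the projection property in each case. This gives two expressions for the cross term: $(I_H\tilde e^{n+2},I_He^{n+2})=(1+\frac{2}{3}\Delta t\chi)\|I_He^{n+2}\|^2$ and $(I_He^{n+2},I_H\tilde e^{n+2})=(1+\frac{2}{3}\Delta t\chi)^{-1}\|I_H\tilde e^{n+2}\|^2$. The paper then equates them by symmetry of the inner product.

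You instead apply $I_H$ (equivalently, test with $I_H\phi$ for arbitrary $\phi$) to obtain $I_H\tilde e^{n+2}=(1+\frac{2}{3}\Delta t\chi)I_He^{n+2}$ as an identity between functions, and the norm identity follows at once. Your version gives strictly more information: the two projected errors are scalar multiples of each other, not merely related in norm. It also makes the squared factor transparent, and, as your consistency check shows, it recovers (5.2) immediately. The paper's two-test-function argument stays within the inner-product manipulations used elsewhere in Section 5, but it reaches only the scalar conclusion.

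Both arguments rely on the same implicit assumption $X^H\subset X_h$, so that $I_H$-images are admissible test functions. You state this explicitly; the paper does not.

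One small refinement: you do not need to interpret $e^{n+2}$ itself in $X_h$. The equation involves only $e^{n+2}-\tilde e^{n+2}=\tilde v^{n+2}-v^{n+2}\in X_h$ and $I_He^{n+2}\in X^H$, so the weak analysis step is already equivalent to the strong identity on $X_h$.
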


\begin{proof}
    At time $t^{n+2}$, the error equation is
    \begin{align}
        \Bigg(\frac{3e^{n+2}-3\Tilde{e}^{n+2}}{2\Delta t},v_h\Bigg) +\chi (I_He^{n+2}, v_h)=0.
    \end{align}
    We choose $v_h=I_He^{n+2}$. Then
    \begin{align}
        \frac{3}{2\Delta t}(e^{n+2}-\tilde{e}^{n+2}, I_He^{n+2})+\chi||I_He^{n+2}||^2=0.
    \end{align}
    We apply $I_H$ projection to the first term and rearrange terms to get
    \begin{align}
        (I_H\tilde{e}^{n+2}, I_He)=\Bigg(1+\frac{2}{3}\Delta t\chi \Bigg)||I_He^{n+2}||^2.
    \end{align}
    Now we choose $v_h=I_H\tilde{e}^{n+2}$. Then
    \begin{align}
        \frac{3}{2\Delta t}(e^{n+2}-\tilde{e}^{n+2}, I_H\tilde{e}^{n+2})+\chi(I_He^{n+2}, I_H\tilde{e}^{n+2})=0.
    \end{align}
    We again apply $I_H$ projection to the first term and rearrange terms to get
    \begin{align}
        (I_He^{n+2}, I_H\tilde{e}^{n+2})=\frac{1}{1+\frac{2}{3}\Delta t \chi}||I_H\tilde{e}^{n+2}||^2.
    \end{align}
    Due to symmetry, we can combine (5.13) and (5.15) to yield
    \begin{align}
        \Bigg(1+\frac{2}{3}\Delta t \chi\Bigg)||I_He^{n+2}||^2=\frac{1}{1+\frac{2}{3}\Delta t\chi}||I_H\tilde{e}^{n+2}||^2.
    \end{align}
    Multiplying through by $\Bigg(1+\frac{2}{3}\Delta t\chi\Bigg)$ completes the proof.
\end{proof}

The following lemma is crucial for handling the time discretization term in the error analysis.

\begin{lemma}
    Consider the analysis step in (4.3). Then the following identity holds:
    \begin{align}
        &\Bigg(\frac{3e^{n+2}-4e^{n+1}+e^n}{2\Delta t}, \Tilde{e}^{n+2}-e^{n+2}\Bigg) \notag \\
        =&\frac{1}{6}\chi \Bigg[ ||I_He^{n+2}||^2-||I_He^{n+1}||^2 + ||2I_He^{n+2}-I_He^{n+1}||^2 - ||2I_He^{n+1}-I_He^n||^2 \\
        &+ ||I_He^{n+2}-2I_He^{n+1}+I_He^n||^2\Bigg]. \notag
    \end{align}
\end{lemma}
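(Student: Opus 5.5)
The plan is to mimic the proof of the second identity (4.5) of Lemma 4.1, now applied to the error equation, and then use Lemma 2.1 in inner-product form.

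\textbf{Step 1: the error equation.} Subtracting (4.3) from the trivial identity (5.3) satisfied by the true solution gives, as in the proof of Lemma 5.2,
\begin{align*}
\Bigg(\frac{3e^{n+2}-3\tilde{e}^{n+2}}{2\Delta t},v_h\Bigg)+\chi(I_He^{n+2},v_h)=0 .
\end{align*}
I would choose the test function $v_h=\frac{1}{3}w$, where $w:=3e^{n+2}-4e^{n+1}+e^n$. Rearranging (the factor $3$ cancels against $\frac13$) yields
\begin{align*}
\Bigg(\frac{3e^{n+2}-4e^{n+1}+e^n}{2\Delta t},\tilde{e}^{n+2}-e^{n+2}\Bigg)=\frac{1}{3}\chi\,(I_He^{n+2},w).
\end{align*}
This is exactly the error analogue of (4.5).

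\textbf{Step 2: moving $I_H$ onto $w$.} Since $I_H$ is the $L^2$ orthogonal projection, it is self-adjoint and idempotent. Hence $(I_He^{n+2},w)=(I_He^{n+2},I_Hw)$. By linearity, $I_Hw=3I_He^{n+2}-4I_He^{n+1}+I_He^n$.

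\textbf{Step 3: apply Lemma 2.1.} Set $a=I_He^{n+2}$, $b=I_He^{n+1}$, $c=I_He^n$. Lemma 2.1 is a purely algebraic identity: expanding both sides uses only bilinearity and symmetry. It therefore holds verbatim with products replaced by $L^2$ inner products and squares by squared norms, giving
\begin{align*}
2(a,3a-4b+c)=\|a\|^2-\|b\|^2+\|2a-b\|^2-\|2b-c\|^2+\|a-2b+c\|^2 .
\end{align*}
Multiplying by $\frac{1}{2}\cdot\frac{\chi}{3}=\frac{\chi}{6}$ gives exactly the right-hand side of the claimed identity.

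\textbf{Main obstacle.} The algebra itself is routine. The delicate point is the legitimacy of the test function in Step 1. The error equation is posed for $v_h\in X_h$, but $e^k=u(t^k)-v^k$ need not lie in $X_h$ since $u$ is not discrete.

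I would handle this the same way the paper does in Proposition 5.1 and Lemma 5.2: treat the analysis-step error relation as holding in $L^2$, i.e. pointwise as in (1.5). Taking inner products with arbitrary $L^2$ functions is then permitted. Alternatively, one can test only with $I_Hw$. Writing $X^H\subset X_h$, note that $I_H$ commutes with the relation after projecting. The identity then follows from $(e^{n+2}-\tilde e^{n+2},I_Hw)$ together with the self-adjointness used in Step 2.

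I would also double-check the sign bookkeeping when passing from $e^{n+2}-\tilde e^{n+2}$ to $\tilde e^{n+2}-e^{n+2}$. This sign is what makes the right-hand side a positive multiple of the Lemma 2.1 telescoping form.
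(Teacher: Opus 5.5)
Your proposal is correct and follows the paper's proof essentially step for step. The paper also subtracts the analysis step from the trivial identity for $u$, using the pointwise form (1.5), which is how it sidesteps the $v_h\in X_h$ issue you flag. It then tests with $\frac{1}{3}(3e^{n+2}-4e^{n+1}+e^n)$, moves $I_H$ onto the test function, and applies Lemma 2.1 to obtain the factor $\chi/6$.
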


\begin{proof}
    At time $t^{n+2}$, the true solution $u$ satisfies
    \begin{align}
        \frac{3u^{n+2}-3u^{n+2}}{2\Delta t} -\chi I_H(u(t^{n+2})-u^{n+2})=0.
    \end{align}
    Subtract (1.5) from (5.18) and take inner product with $\frac{3e^{n+2}-4e^{n+1}+e^n}{3}$ gives
    \begin{align}
        \Bigg(\frac{3e^{n+2}-3\Tilde{e}^{n+2}}{2\Delta t}, \frac{3e^{n+2}-4e^{n+1}+e^n}{3}\Bigg)
        &+\chi \Bigg(I_He^{n+2}, \frac{3e^{n+2}-4e^{n+1}+e^n}{3}\Bigg)=0. \notag
    \end{align}
    For the second term, we apply $I_H$ to right side of the inner product and use Lemma 2.1. Rearranging terms completes the proof.
\end{proof}

We now present the first error estimate in Theorem 5.4.

\begin{theorem}
    Consider forecast step in (1.3) and analysis step in (1.5). Suppose $I_H$ is an $L^2$ projection. Assume 
    \begin{align}
        \frac{31}{8}\nu-4\frac{U}{l}C_P^2L^2>0 \text{, equivalently, } Re=\frac{UL}{\nu}<\frac{31}{32}\frac{l}{C_P^2L}. \notag
    \end{align} Then the following holds:
    \begin{align}
        ||e^N||^2
        &+||2e^N-e^{N-1}||^2+\frac{2}{3}\Delta t\chi ||I_He^N||^2+\frac{2}{3}\Delta t\chi ||2I_He^N-I_He^{N-1}||^2 \notag \\
        &+\sum_{n=0}^{N-2}||e^{n+2}-2e^{n+1}+e^n||^2 + \frac{2}{3}\Delta t\chi \sum_{n=0}^{N-2} ||I_H(e^{n+2}-2e^{n+1}+e^n)||^2 \notag \\
        &+ 6 \sum_{n=0}^{N-2} ||\tilde{e}^{n+2}-e^{n+2}||^2 +\Delta t\Bigg(\frac{31}{8}\nu -4\frac{U}{l}C_P^2L^2\Bigg) \sum_{n=0}^{N-2}||\nabla \tilde{e}^{n+2}||^2 \\
        \leq \frac{8}{\nu}&C\Delta t^4\int_{t^0}^{t^{N}}||u_{ttt}||_{-1}^2 + ||e^1||^2+||2e^1-e^0||^2 \notag \\
        &+\frac{2}{3}\Delta t\chi ||I_He^1||^2+\frac{2}{3}\Delta t\chi ||2I_He^1-I_He^0||^2. \notag
    \end{align}
\end{theorem}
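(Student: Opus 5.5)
We follow the template of the proof of Theorem 4.2, now applied to the error equations. Set $e^n=u(t^n)-v^n$ and $\tilde e^{n+2}=u(t^{n+2})-\tilde v^{n+2}$. The true solution satisfies the forecast equation with a consistency residual. Precisely, for all $v_h$,
\begin{align}
\Bigg(\frac{3u^{n+2}-4u^{n+1}+u^n}{2\Delta t},v_h\Bigg)+b(u^{n+2},u^{n+2},v_h)+\nu(\nabla u^{n+2},\nabla v_h)-(p^{n+2},\nabla\cdot v_h)=(f^{n+2},v_h)+(\tau^{n+2},v_h), \notag
\end{align}
where $\tau^{n+2}=\frac{3u^{n+2}-4u^{n+1}+u^n}{2\Delta t}-u_t(t^{n+2})$. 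Subtracting the forecast step (1.3) gives the forecast error equation in $\tilde e^{n+2}$, $e^{n+1}$, $e^n$. The analysis error equation is the one already used in Proposition 5.1 and Lemma 5.3. We suppress spatial discretization (or assume the pressure term vanishes against divergence-free test functions), since the statement contains no spatial error terms.

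\textbf{Step 1 (discrete time derivative).} Take $v_h=\tilde e^{n+2}$ in the forecast error equation. Split the BDF2 term exactly as in (4.12):
\begin{align}
\Bigg(\frac{3\tilde e^{n+2}-4e^{n+1}+e^n}{2\Delta t},\tilde e^{n+2}\Bigg)={}&\Bigg(\frac{3e^{n+2}-4e^{n+1}+e^n}{2\Delta t},e^{n+2}\Bigg)+\Bigg(\frac{3\tilde e^{n+2}-3e^{n+2}}{2\Delta t},\tilde e^{n+2}\Bigg) \notag\\
&+\Bigg(\frac{3e^{n+2}-4e^{n+1}+e^n}{2\Delta t},\tilde e^{n+2}-e^{n+2}\Bigg). \notag
\end{align}
Each piece is handled by an earlier result:
\begin{itemize}
\item The first piece gets Lemma 2.1, which produces the telescoping $\|e\|^2$ and $\|2e-e\|^2$ terms together with $\|e^{n+2}-2e^{n+1}+e^n\|^2$.
\item The second piece gets polarization, and then (5.1) eliminates $\|\tilde e^{n+2}\|^2-\|e^{n+2}\|^2$. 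This leaves $\|\tilde e^{n+2}-e^{n+2}\|^2$ with total weight $6$ after multiplying by $4\Delta t$, plus a nonnegative $\Delta t\chi\|I_He^{n+2}\|^2$ term, which we drop.
\item The third piece gets Lemma 5.3. This is why the $\chi$-weighted BDF2 telescoping terms in $I_He$ appear with coefficient $\frac{4\Delta t}{6}\chi=\frac23\Delta t\chi$. The third-difference term $\frac23\Delta t\chi\|I_H(e^{n+2}-2e^{n+1}+e^n)\|^2$ also appears on the left.
\end{itemize}

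\textbf{Step 2 (nonlinear and consistency terms).} The remaining terms are the viscous term $4\Delta t\nu\|\nabla\tilde e^{n+2}\|^2$, the nonlinear difference, and the consistency term.

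For the consistency term, bound $4\Delta t(\tau^{n+2},\tilde e^{n+2})\le 4\Delta t\|\tau^{n+2}\|_{-1}\|\nabla\tilde e^{n+2}\|\le \frac{8\Delta t}{\nu}\|\tau^{n+2}\|_{-1}^2+\frac{\nu\Delta t}{2}\|\nabla\tilde e^{n+2}\|^2$. Lemma 2.2 with $r=-1$ then gives $\frac{8}{\nu}C\Delta t^4\int_{t^n}^{t^{n+2}}\|u_{ttt}\|_{-1}^2$.

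For the nonlinear term, write $b(u,u,\tilde e)-b(\tilde v,\tilde v,\tilde e)=b(\tilde e,u,\tilde e)+b(\tilde v,\tilde e,\tilde e)$. The last term vanishes by skew-symmetry. The remaining term $b(\tilde e^{n+2},u^{n+2},\tilde e^{n+2})$ is bounded using $\|u\|_\infty\le U$ and $\|\nabla u\|_\infty\le U/l$ in the sense of the definitions of $U$ and $l$. Combined with the Poincar\'e inequality, this gives $|b(\tilde e,u,\tilde e)|\le \frac{U}{l}\|\tilde e\|^2\le \frac Ul C_P^2L^2\|\nabla\tilde e\|^2$, which contributes $4\Delta t\frac Ul C_P^2L^2\|\nabla\tilde e^{n+2}\|^2$. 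The constant $\frac{31}{8}$ must come from the bookkeeping of $4-\frac12$ plus a further small Young absorption (e.g.\ $\frac{\nu}{8}$ spent elsewhere). We would adjust the Young weights to land exactly on $4-\frac18=\frac{31}{8}$: put the consistency term against $\frac{\nu\Delta t}{8}$, with constant $\frac{8}{\nu}$ matching $\frac{4^2}{2\cdot\nu/4}$.

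\textbf{Step 3 (sum).} Summing from $n=0$ to $N-2$ telescopes the energy terms to the initial data $e^0,e^1$ and yields the stated estimate. The small-Reynolds hypothesis guarantees the gradient coefficient is positive. The main obstacle is the nonlinear term. Controlling $b(\tilde e,u,\tilde e)$ purely by $U/l$ and Poincar\'e, rather than by a Gronwall argument, is what makes the estimate uniform in time. Matching the exact constants $\frac{31}{8}$ and $4\frac Ul C_P^2L^2$ requires care with the skew-symmetric form's two halves and with the interpretation of the sup norms.
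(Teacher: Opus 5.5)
Your outline follows the paper's proof. It uses the same three-way split of the BDF2 term; your version, with $3e^{n+2}$ in the third piece, is the correct one, since (4.12) has a misprint there. The remaining ingredients also match: Lemma 2.1, polarization plus (5.1), Lemma 5.3, Lemma 2.2, and a Gronwall-free bound of the nonlinearity by $\frac{U}{l}\|\tilde e^{n+2}\|^2$ followed by Poincar\'e.

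The one step that fails as you justify it is that nonlinear bound. If you estimate the two halves of $b(\tilde e,u,\tilde e)$ separately, using $\|\nabla u\|_\infty\le U/l$ and $\|u\|_\infty\le U$, you only get
\begin{align*}
|b(\tilde e,u,\tilde e)|\le \frac{U}{2l}\|\tilde e\|^2+\frac{U}{2}\|\tilde e\|\,\|\nabla\tilde e\|\le\Big(\frac{U}{2l}C_P^2L^2+\frac{U}{2}C_PL\Big)\|\nabla\tilde e\|^2 .
\end{align*}
This is bounded by $\frac{U}{l}C_P^2L^2\|\nabla\tilde e\|^2$ only if $l\le C_PL$, which the hypotheses do not provide. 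The missing ingredient is incompressibility. In the space-continuous setting of (1.3)--(1.5) you have $\nabla\cdot\tilde e^{n+2}=0$ and $\tilde e^{n+2}=0$ on $\partial\Omega$. Integrating by parts gives $-(\tilde e\cdot\nabla\tilde e,u)=(\tilde e\cdot\nabla u,\tilde e)$, hence $b(\tilde e,u,\tilde e)=(\tilde e\cdot\nabla u,\tilde e)\le \frac{U}{l}\|\tilde e\|^2$, and $\|u\|_\infty$ is never needed. The paper reaches the same term directly from the convective form, where $(\tilde v^{n+2}\cdot\nabla\tilde e^{n+2},\tilde e^{n+2})=0$ leaves only $(\tilde e^{n+2}\cdot\nabla u^{n+2},\tilde e^{n+2})$.

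On the constants, your first Young inequality is correct. It gives $\frac{8\Delta t}{\nu}\|\tau^{n+2}\|_{-1}^2+\frac{\nu\Delta t}{2}\|\nabla\tilde e^{n+2}\|^2$, hence the coefficient $\frac72\nu$. That is exactly what the paper's (5.25) yields once (5.22) is multiplied by $4\Delta t$; compare the $\frac72\nu$ in Theorem 5.5. The $\frac{31}{8}$ does not come out of that bookkeeping.

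Your proposed repair is arithmetically wrong, however. Absorbing only $\frac{\nu\Delta t}{8}\|\nabla\tilde e^{n+2}\|^2$ forces the prefactor $\frac{32}{\nu}$; indeed $\frac{4^2}{2\cdot\nu/4}=\frac{32}{\nu}$, not $\frac{8}{\nu}$. The statement survives only because $C$ is generic: $\frac{32}{\nu}C=\frac{8}{\nu}(4C)$. The factor $2$ from summing the overlapping integrals $\int_{t^n}^{t^{n+2}}$ is absorbed the same way. You should say this explicitly rather than claim both $\frac{31}{8}$ and $\frac{8}{\nu}$ with the same constant $C$.
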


\begin{proof}
    For the forecast step, at time $t^{n+2}$, the true solution $u,p$ satisfies
    \begin{align}
        \frac{3u^{n+2}-4u^{n+1}+u^n}{2\Delta t}+u^{n+2} \cdot \nabla u^{n+2} - \nu \Delta u^{n+2} + \nabla p^{n+2}=f^{n+2}+\rho^{n+2},
    \end{align}
    where $\rho^{n+2}=\frac{3u^{n+2}-4u^{n+1}+u^n}{2\Delta t}-u_t(t^{n+2})$ is the consistency error term.
    
    Subtracting (1.3) from (5.20) and taking inner product with $\Tilde{e}^{n+2}$ yields
    \begin{align}
        \Bigg(\frac{3\Tilde{e}^{n+2}-4e^{n+1}+e^n}{2\Delta t}, \Tilde{e}^{n+2}\Bigg)
        &+(u^{n+2} \cdot \nabla u^{n+2}- \tilde{v}^{n+2} \cdot \nabla \Tilde{v}^{n+2}, \Tilde{e}^{n+2}) \notag \\
        &+ \nu ||\nabla \Tilde{e}^{n+2}||^2=(\rho^{n+2}, \Tilde{e}^{n+2}).
    \end{align}
    We rewrite the first term exactly like in (4.12). Now, the first term is split into three terms. For the split terms, we apply Lemma 2.1 to the first term, polarization identity to the second term, and (5.17) of Lemma 5.3 to the third term. Then, we have
    \begin{align}
        &\frac{1}{4\Delta t}\Bigg[||e^{n+2}||^2-||e^{n+1}||^2+||2e^{n+2}-e^{n+1}||^2-||2e^{n+1}-e^n||^2+||e^{n+2}-2e^{n+1}+e^n||^2\Bigg] \notag   \\
        &+\frac{1}{6}\chi \Bigg[ ||I_He^{n+2}||^2 - ||I_He^{n+1}||^2 + ||2I_He^{n+2}-I_He^{n+1}||^2 - ||2I_He^{n+1}-I_He^n||^2  \notag \\
        &+||I_He^{n+2}-2I_He^{n+1}+I_He^n||^2\Bigg] \\
        &+\frac{3}{4\Delta t}\Bigg[||\Tilde{e}^{n+2}||^2-||e^{n+2}||^2 + ||\Tilde{e}^{n+2}-e^{n+2}||^2 \Bigg] \notag \\
        &+(u^{n+2} \cdot \nabla u^{n+1}- \tilde{v}^{n+2} \cdot \nabla \Tilde{v}^{n+2}, \Tilde{e}^{n+2}) +\nu ||\nabla \Tilde{e}^{n+2}||^2 = (\rho^{n+2},\Tilde{e}^{n+2}). \notag
    \end{align}
    We handle the nonlinear term by adding and subtracting $\tilde{v}^{n+2}\cdot \nabla u^{n+2}$ as follows
    \begin{align}
        &(u^{n+2} \cdot \nabla u^{n+2}-\tilde{v}^{n+2} \cdot \nabla \Tilde{v}^{n+2}, \Tilde{e}^{n+2}) \notag \\
        =&(u^{n+2} \cdot \nabla u^{n+2} -\tilde{v}^{n+2} \cdot \nabla u^{n+2} + \tilde{v}^{n+2}\cdot \nabla u^{n+2} + \tilde{v}^{n+2}\cdot \nabla \tilde{v}^{n+2}, \Tilde{e}^{n+2}) \notag \\
        =&(\tilde{e}^{n+2}\cdot \nabla u^{n+2}, \Tilde{e}^{n+2})+(\tilde{v}^{n+2}\cdot \nabla \Tilde{e}^{n+2}, \Tilde{e}^{n+2}) \notag \\
        =&(\tilde{e}^{n+2}\cdot \nabla u^{n+2}, \Tilde{e}^{n+2}).    \end{align}
    We bound (5.23) using H\"{o}lder's inequality as follows
    \begin{align}
        (\tilde{e}^{n+2}\cdot \nabla u^{n+2}, \Tilde{e}^{n+2}) 
        \leq ||\tilde{e}^{n+2}||\cdot ||\nabla u^{n+2}||_\infty \cdot ||\Tilde{e}^{n+2}|| &\leq \frac{U}{l}||\tilde{e}^{n+2}||^2 \notag \\
        &\leq \frac{U}{l}C_P^2L^2 ||\nabla \tilde{e}^{n+2}||^2,
    \end{align}
    where we use Poincar\'e for the last inequality. For the consistency term, we bound using H\"{o}lder's and Young's inequalities as follows
    \begin{align}
        (\rho^{n+2}, \Tilde{e}^{n+2})\leq ||\rho^{n+2}||_{-1} \cdot ||\nabla \Tilde{e}^{n+2}||\leq \frac{2}{\nu}||\rho^{n+2}||_{-1}^2+\frac{\nu}{8}||\nabla \Tilde{e}^{n+2}||^2.
    \end{align}
    Using Lemma 2.2, 
    \begin{align}
        ||\rho^{n+2}||_{-1}^2\leq C\Delta t^3\int_{t^n}^{t^{n+2}}||u_{ttt}||_{-1}^2dt.
    \end{align}
    For the analysis step, at time $t^{n+2}$, the true solution $u$ satisifies
    \begin{align}
        \frac{3u^{n+2}-3u^{n+2}}{2\Delta t}-\chi I_H(u(t^{n+2})-u^{n+2})=0.
    \end{align}
    Subtracting (1.5) from (5.18) and taking inner product with $e^{n+2}$ gives
    \begin{align}
        \Bigg(\frac{3e^{n+2}-3\Tilde{e}^{n+2}}{2\Delta t}, e^{n+2}\Bigg)+\chi (I_He^{n+2},e^{n+2})=0.
    \end{align}
    We apply the polarization identity to the first term. For the second term, we apply $I_H$ to the right side of the inner product. Rearranging terms, we have 
    \begin{align}
        ||e^{n+2}||^2=||\tilde{e}^{n+2}||^2-||e^{n+2}-\tilde{e}^{n+2}||^2-\frac{4}{3}\Delta t\chi ||I_He^{n+2}||^2.
    \end{align}
    We use (5.29) to replace $||e^{n+2}||^2$ in (5.22) and combine the resulting equality with (5.24) and (5.26). Rearranging terms, we have
    \begin{align}
        &\Bigg[||e^{n+2}||^2-||e^{n+1}||^2+||2e^{n+2}-e^{n+1}||^2-||2e^{n+1}-e^n||^2+||e^{n+2}-2e^{n+1}+e^n||^2\Bigg] \notag  \\
        &+\frac{2}{3}\Delta t\chi \Bigg[ ||I_He^{n+2}||^2 - ||I_He^{n+1}||^2 + ||2I_He^{n+2}-I_He^{n+1}||^2 - ||2I_He^{n+1}-I_He^n||^2 \notag \\
        &+||I_He^{n+2}-2I_He^{n+1}+I_He^n||^2\Bigg]  \\
        &+6||\Tilde{e}^{n+2}-e^{n+2}||^2 +4\Delta t\chi ||I_He^{n+2}||^2 +\Delta t\Bigg(\frac{31}{8}\nu -4\frac{U}{l}C_P^2L^2\Bigg) ||\nabla \Tilde{e}^{n+2}||^2 \notag \\
        &\leq \frac{8}{\nu}C\Delta t^4\int_{t^n}^{t^{n+2}}||u_{ttt}||_{-1}^2dt. \notag
    \end{align}
    Taking the sum from $n = 0$ to $N-2$
    completes the proof.
\end{proof}

\begin{corollary}
    Consider forecast step in (1.3) and analysis step in (1.5). Suppose $I_H$ is an $L^2$ projection. Assume 
    \begin{align}
        \frac{31}{8}\nu-4\frac{U}{l}C_P^2L^2>0  \text{, equivalently, } Re=\frac{UL}{\nu}<\frac{31}{32}\frac{l}{C_P^2L}. \notag
    \end{align} Then the following holds:
    \begin{align}
        ||e^N||^2\leq C&\Delta t^4\int_{t^0}^{t^{N}}||u_{ttt}||_{-1}^2 + ||e^1||^2+||2e^1-e^0||^2 \notag \\
        &+\frac{2}{3}\Delta t\chi ||I_He^1||^2+\frac{2}{3}\Delta t\chi ||2I_He^1-I_He^0||^2.
    \end{align}
\end{corollary}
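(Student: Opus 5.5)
This corollary follows from Theorem 5.4 by dropping nonnegative terms on the left-hand side of (5.19). Under the stated assumption $\frac{31}{8}\nu-4\frac{U}{l}C_P^2L^2>0$, every term on the left of (5.19) other than $||e^N||^2$ is nonnegative. These terms are $||2e^N-e^{N-1}||^2$, the two terms $\frac{2}{3}\Delta t\chi\,||\cdot||^2$ at level $N$, the numerical dissipation sums $\sum||e^{n+2}-2e^{n+1}+e^n||^2$ and $\frac{2}{3}\Delta t\chi\sum||I_H(e^{n+2}-2e^{n+1}+e^n)||^2$, the sum $6\sum||\tilde e^{n+2}-e^{n+2}||^2$, and the viscous term $\Delta t(\frac{31}{8}\nu-4\frac{U}{l}C_P^2L^2)\sum||\nabla\tilde e^{n+2}||^2$. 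The first few are squared norms multiplied by $\Delta t\chi>0$ or by positive integers. The viscous term is nonnegative precisely because of the Reynolds number hypothesis.

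The plan has three steps.
\begin{enumerate}
\item Invoke Theorem 5.4 under the same hypotheses, namely the forecast step (1.3), the analysis step (1.5), $I_H$ an $L^2$ projection, and the same Reynolds number condition.
\item Discard all nonnegative terms on the left-hand side, keeping only $||e^N||^2$.
\item Absorb the factor $\frac{8}{\nu}$ multiplying $C\Delta t^4\int_{t^0}^{t^N}||u_{ttt}||_{-1}^2$ into a generic constant $C$, which now depends on $\nu$ and on the constant from Lemma 2.2.
\end{enumerate}
The initial-data terms $||e^1||^2+||2e^1-e^0||^2+\frac{2}{3}\Delta t\chi||I_He^1||^2+\frac{2}{3}\Delta t\chi||2I_He^1-I_He^0||^2$ carry over unchanged.

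There is essentially no obstacle. The only point needing care is that the coefficient of $\sum||\nabla\tilde e^{n+2}||^2$ is strictly positive, which is exactly the assumed inequality. The equivalent form $Re<\frac{31}{32}\frac{l}{C_P^2L}$ is obtained by dividing $\frac{31}{8}\nu>4\frac{U}{l}C_P^2L^2$ by $4\nu$ and rearranging. We also note that the resulting bound does not grow with $N$ beyond the time integral of $||u_{ttt}||_{-1}^2$, in keeping with the remark that the errors do not grow over time.
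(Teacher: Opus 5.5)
Your proposal is correct and matches the paper's proof: start from (5.19), note that the Reynolds-number hypothesis (together with $\chi>0$) makes every left-hand term other than $\|e^N\|^2$ nonnegative, drop those terms, and keep the integral and initial-error terms on the right. You also absorb the factor $\frac{8}{\nu}$ into the generic constant $C$, a step the paper leaves implicit.
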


\begin{proof}
    We begin with (5.19) and assume $\frac{31}{8}\nu-4\frac{U}{l}C_P^2>0$. Since all LHS terms are nonnegative, we can drop all but $||e^N||^2$. On the RHS, we have the integral term and initial errors, bounding the error at time $N$.
\end{proof}

\begin{corollary}
    Consider forecast step in (1.3) and analysis step in (1.5). Suppose $I_H$ is an $L^2$ projection. Assume 
    \begin{align}
        \frac{31}{8}\nu-4\frac{U}{l}C_P^2L^2>0 \text{, equivalently, } Re=\frac{UL}{\nu}<\frac{31}{32}\frac{l}{C_P^2L}. \notag
    \end{align} Then the following holds:
    \begin{align}
        ||e^N||=\mathcal{O}(\Delta t^2).
    \end{align}
\end{corollary}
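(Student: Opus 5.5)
The plan is to deduce the $\mathcal{O}(\Delta t^2)$ rate directly from Corollary 5.5 (the bound on $||e^N||^2$ that follows from Theorem 5.4). The argument has three steps.

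First, under the stated condition $\frac{31}{8}\nu-4\frac{U}{l}C_P^2L^2>0$, Corollary 5.5 gives
\begin{align}
||e^N||^2\leq C\Delta t^4\int_{t^0}^{t^N}||u_{ttt}||_{-1}^2\,dt+||e^1||^2+||2e^1-e^0||^2+\frac{2}{3}\Delta t\chi ||I_He^1||^2+\frac{2}{3}\Delta t\chi||2I_He^1-I_He^0||^2. \notag
\end{align}
We assume, as is implicit in Lemma 2.2, that $u_{ttt}\in L^2(0,T;H^{-1})$ on the time interval considered. Then the integral term is bounded by $C(u)\Delta t^4$, with a constant independent of $N$ and $\Delta t$. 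The dependence on $T$ enters only through $\int_0^T||u_{ttt}||_{-1}^2$. It disappears entirely if $u_{ttt}\in L^2(0,\infty;H^{-1})$, which is consistent with the earlier remark that the errors do not grow over time.

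Second, we control the initial data. The starting values $v^0,v^1$ must be chosen compatibly with a second order method: either exact data, or $v^1$ produced by a one-step second order starter, so that $||e^0||+||e^1||=\mathcal{O}(\Delta t^2)$. Since $I_H$ is an $L^2$ projection, $||I_He^j||\le||e^j||$. Hence every initial term, $||e^1||^2$, $||2e^1-e^0||^2\le 2(4||e^1||^2+||e^0||^2)$, and the two $\Delta t\chi$-weighted terms, is $\mathcal{O}(\Delta t^4)$; the weighted terms are in fact of higher order for fixed $\chi$.

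Third, we combine these estimates and take square roots: $||e^N||\le\sqrt{C\Delta t^4}=C'\Delta t^2$.

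The main obstacle is the second step. The paper's setup allows $v_0\neq u_0$, and in that case $||e^0||$ is $\mathcal{O}(1)$ and the bound in Corollary 5.5 does not decay. The statement must therefore be read with the standing assumption that the initial errors are $\mathcal{O}(\Delta t^2)$, or else as an $\mathcal{O}(\Delta t^2)$ bound up to the initial-error contribution. I would state this hypothesis explicitly in the proof. The remaining work is only ensuring that the constants from Lemma 2.2 and the Young's inequality in (5.25) are independent of $\Delta t$, which they are.
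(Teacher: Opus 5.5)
Your proposal is correct and follows the paper's own argument. You start from the bound (5.19) of Theorem 5.4, equivalently the preceding corollary, drop the nonnegative left-hand terms, take square roots, and read off the $\Delta t^2$ integral term; your explicit hypothesis that $||e^0||,||e^1||=\mathcal{O}(\Delta t^2)$ makes precise what the paper leaves implicit when it calls the integral term ``dominant''. That hypothesis is genuinely needed, since with $v_0\neq u_0$ the bound would keep an $\mathcal{O}(1)$ term.
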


\begin{proof}
    We begin with (5.19). We take the square root of both sides of the inequality. Since all LHS terms are nonnegative, drop all but $||e^N||^2$. On the RHS, the dominant term is the integral term with coefficient $\Delta t^2$. Thus, the error at time $N$ is second-order accurate in time. 
\end{proof}
From the two corollaries above, we know the error does not grow in time, the numerical solution remains uniformly bounded for all $N$, and the method converges second-order in time.

Now we consider a different strategy in proving the error estimate and present the small timestep estimate in Theorem 5.5.

\begin{theorem}
    Consider forecast step in (1.3) and analysis step in (1.5). Suppose $I_H$ is an $L^2$ projection. Assume 
    \begin{align}
        \nu-\frac{8}{7}\frac{U}{l}C^2H^2>0 \text{, equivalently, } Re=\frac{UL}{\nu}<\frac{7}{8}\frac{lL}{C^2H^2}. \notag
    \end{align}
    Define $T^*=\frac{L}{U}$. Assume
    \begin{align}
    \frac{\Delta t}{T^*} \leq \frac{3}{8}\frac{l}{L}. \notag
    \end{align}
    If $\chi\in [\chi_1,\chi_2]\cap(0,\infty)$, where $\chi_1,\chi_2$ are the roots of $\chi-\frac{U}{l}(1+\frac{2}{3}\Delta t\chi)^2$, then the following holds:
    \begin{align}
        ||e^N||^2
        &+||2e^N-e^{N-1}||^2+\frac{2}{3}\Delta t\chi ||I_He^N||^2+\frac{2}{3}\Delta t\chi ||2I_He^N-I_He^{N-1}||^2  \notag \\
        +\sum_{n=0}^{N-2}&||e^{n+2}-2e^{n+1}+e^n||^2 + \frac{2}{3}\Delta t\chi \sum_{n=0}^{N-2} ||I_H(e^{n+2}-2e^{n+1}+e^n)||^2 + 6 \sum_{n=0}^{N-2} ||\tilde{e}^{n+2}-e^{n+2}||^2 \notag \\
        +\Delta t&\Bigg(\frac{7}{2}\nu -4\frac{U}{l}C^2H^2\Bigg) \sum_{n=0}^{N-2}||\nabla \tilde{e}^{n+2}||^2+4\Delta t\Bigg(\chi - \frac{U}{l}(1+\frac{2}{3}\Delta t\chi)^2\Bigg)\sum_{n=0}^{N-2} ||I_He^{n+2}||^2 \\
        \leq \frac{8}{\nu}&C\Delta t^4\int_{t^0}^{t^{N}}||u_{ttt}||_{-1}^2 + ||e^1||^2+||2e^1-e^0||^2+\frac{2}{3}\Delta t\chi ||I_He^1||^2+\frac{2}{3}\Delta t\chi ||2I_He^1-I_He^0||^2. \notag
    \end{align}
\end{theorem}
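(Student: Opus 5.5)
The plan is to repeat the proof of Theorem 5.4 up to the point where the nonlinear term is handled. The only change is that $\|\tilde e^{n+2}\|^2$ is no longer bounded by Poincar\'e. Instead, it is split into its observed part and its unobserved part.

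\textbf{Energy identity.} As before, subtract (1.3) from (5.20) and test with $\tilde e^{n+2}$. Split the BDF2 term as in (4.12), then:
\begin{itemize}
\item apply Lemma 2.1 to the first piece;
\item apply the polarization identity to the second piece;
\item apply Lemma 5.3 to the third piece;
\item replace $\|e^{n+2}\|^2$ using (5.29).
\end{itemize}
Multiplying by $4\Delta t$ produces the telescoping energy brackets, the term $6\|\tilde e^{n+2}-e^{n+2}\|^2$, the term $4\Delta t\chi\|I_He^{n+2}\|^2$, and $4\Delta t\nu\|\nabla\tilde e^{n+2}\|^2$. The consistency term is bounded exactly as in (5.25)--(5.26), using $\nu/8$ of the viscous term. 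This yields the right-hand side $\frac{8}{\nu}C\Delta t^4\int\|u_{ttt}\|_{-1}^2$.

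\textbf{Nonlinear term.} As in (5.23)--(5.24), the nonlinear term reduces to
\[
(\tilde e^{n+2}\cdot\nabla u^{n+2},\tilde e^{n+2})\le \frac{U}{l}\|\tilde e^{n+2}\|^2 .
\]
Since $I_H$ is an $L^2$ orthogonal projection, $\|\tilde e\|^2=\|I_H\tilde e\|^2+\|(I-I_H)\tilde e\|^2$. For the unobserved part, the key assumption (2.1) gives $\|(I-I_H)\tilde e\|^2\le C^2H^2\|\nabla\tilde e\|^2$. After multiplying by $4\Delta t$, this contributes $-4\Delta t\frac{U}{l}C^2H^2\|\nabla\tilde e\|^2$ to the viscous coefficient. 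For the observed part, Lemma 5.2 gives $\|I_H\tilde e^{n+2}\|^2=(1+\frac23\Delta t\chi)^2\|I_He^{n+2}\|^2$. This contributes $-4\Delta t\frac{U}{l}(1+\frac23\Delta t\chi)^2\|I_He^{n+2}\|^2$, which is absorbed by the $4\Delta t\chi\|I_He^{n+2}\|^2$ term. The result is exactly the coefficient $4\Delta t\bigl(\chi-\frac{U}{l}(1+\frac23\Delta t\chi)^2\bigr)$ in the statement. That coefficient is nonnegative precisely when $\chi$ lies between the roots $\chi_1,\chi_2$ of this quadratic in $\chi$. The roots are real iff the discriminant is nonnegative, i.e.\ $1-\frac{8}{3}\Delta t\frac{U}{l}\ge0$, which is equivalent to $\frac{\Delta t}{T^*}\le\frac38\frac{l}{L}$. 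This is the role of the time-step hypothesis.

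\textbf{Viscous coefficient and summation.} The viscous coefficient remaining after these steps is $4\nu-\frac{\nu}{8}-4\frac{U}{l}C^2H^2$. The statement records $\frac72\nu-4\frac{U}{l}C^2H^2$, which is smaller, so the statement's bound follows a fortiori. I expect the paper splits the consistency term with a slightly different Young constant, and I would adjust the weights to match. The hypothesis $\nu-\frac87\frac{U}{l}C^2H^2>0$ is exactly $\frac72\nu-4\frac{U}{l}C^2H^2>0$, so all left-hand-side dissipation terms are nonnegative. Finally, sum from $n=0$ to $N-2$ and telescope the two BDF2 brackets, as in Theorem 5.4.

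\textbf{Main obstacle.} The delicate step is the observed-part bound. It requires Lemma 5.2 (which needs $I_H$ to be an orthogonal projection) and the correct placement of the $4\Delta t\chi\|I_He^{n+2}\|^2$ term in the energy identity. The discriminant analysis for the admissible range of $\chi$ is routine algebra.
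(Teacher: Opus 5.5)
Your proposal is correct and follows the paper's proof essentially step for step: the same energy identity from Theorem 5.4, the split $\|\tilde e^{n+2}\|^2=\|I_H\tilde e^{n+2}\|^2+\|(I-I_H)\tilde e^{n+2}\|^2$ with (2.1) on the unobserved part, and Lemma 5.2 to absorb the observed part into $4\Delta t\chi\|I_He^{n+2}\|^2$. One small arithmetic correction: after multiplying by $4\Delta t$, the Young term $\frac{\nu}{8}\|\nabla\tilde e^{n+2}\|^2$ from (5.25) becomes $\frac{\nu}{2}\Delta t\|\nabla\tilde e^{n+2}\|^2$, so with the paper's own Young constants the viscous coefficient is exactly $\frac72\nu-4\frac{U}{l}C^2H^2$, and no a fortiori step or reweighting is needed (the $\frac{31}{8}$ in Theorem 5.4 comes from the same slip).
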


\begin{proof}
    The proof starts identical to the proof of Theorem 5.4 but deviates by a different bound for (5.24).
    We bound (5.24) using H\"{o}lder's inequality as follows
    \begin{align}
        (\tilde{e}^{n+2}\cdot \nabla u^{n+2}, \Tilde{e}^{n+2}) 
        &\leq ||\tilde{e}^{n+2}||\cdot ||\nabla u^{n+2}||_\infty \cdot ||\Tilde{e}^{n+2}|| \notag \\
        &\leq \frac{U}{l}||\tilde{e}^{n+2}||^2, \text{ where } \frac{U}{l}=\max_{\Omega \times (0,\infty)}||\nabla u(x,t)||, U= \max_{\Omega \times (0,\infty)}|| u(x,t)||\notag \\
        &\leq \frac{U}{l}\Bigg[||I_H\tilde{e}^{n+2}||^2+ ||(I-I_H)\tilde{e}^{n+2}||^2\Bigg] \notag \\
        &\leq \frac{U}{l}||I_H\tilde{e}^{n+2}||^2+ \frac{U}{l}C^2H^2||\nabla \tilde{e}^{n+2}||^2,
    \end{align}
    where the last inequality follows from (2.1).
    For the consistency term, we bound using H\"{o}lder's and Young's inequalities as follows
    \begin{align}
        (\rho^{n+2}, \Tilde{e}^{n+2})\leq ||\rho^{n+2}||_{-1} \cdot ||\nabla \Tilde{e}^{n+2}||\leq \frac{2}{\nu}||\rho^{n+2}||_{-1}^2+\frac{\nu}{8}||\nabla \Tilde{e}^{n+2}||^2.
    \end{align}
    Using Lemma 2.2, 
    \begin{align}
        ||\rho^{n+2}||_{-1}^2\leq C\Delta t^3\int_{t^n}^{t^{n+2}}||u_{ttt}||_{-1}^2dt.
    \end{align}
    For the analysis step, at time $t^{n+2}$, the true solution $u$ satisifies
    \begin{align}
        \frac{3u^{n+2}-3u^{n+2}}{2\Delta t}-\chi I_H(u(t^{n+2})-u^{n+2})=0.
    \end{align}
    Subtracting (1.5) from (5.37) and taking inner product with $e^{n+2}$ gives
    \begin{align}
        \Bigg(\frac{3e^{n+2}-3\Tilde{e}^{n+2}}{2\Delta t}, e^{n+2}\Bigg)+\chi (I_He^{n+2},e^{n+2})=0.
    \end{align}
    We apply the polarization identity to the first term. For the second term, we apply $I_H$ to the right side of the inner product. Rearranging terms, we have 
    \begin{align}
        ||e^{n+2}||^2=||\tilde{e}^{n+2}||^2-||e^{n+2}-\tilde{e}^{n+2}||^2-\frac{4}{3}\Delta t\chi ||I_He^{n+2}||^2.
    \end{align}
    We use (5.39) to replace $||e^{n+2}||^2$ in (5.22) and combine the resulting equality with (5.34) and (5.36). Rearranging terms, we have
    \begin{align}
        \Bigg[||e^{n+2}||^2-||e^{n+1}||^2+&||2e^{n+2}-e^{n+1}||^2-||2e^{n+1}-e^n||^2+||e^{n+2}-2e^{n+1}+e^n||^2\Bigg] \notag  \\
        +\frac{2}{3}\Delta t\chi \Bigg[||I_He^{n+2}&||^2 - ||I_He^{n+1}||^2 + ||2I_He^{n+2}-I_He^{n+1}||^2 - ||2I_He^{n+1}-I_He^n||^2 \notag \\
        +||&I_He^{n+2}-2I_He^{n+1}+I_He^n||^2\Bigg]  \\
        +6||\Tilde{e}^{n+2}-e^{n+2}&||^2 +4\Delta t\chi ||I_He^{n+2}||^2 +\Delta t\Bigg(\frac{7}{2}\nu -4\frac{U}{l}C^2H^2\Bigg) ||\nabla \Tilde{e}^{n+2}||^2 \notag \\
        \leq \frac{8}{\nu}C\Delta t^4\int_{t^n}^{t^{n+2}}&||u_{ttt}||_{-1}^2dt + 4\Delta t\frac{U}{l}||I_H\tilde{e}^{n+2}||^2. \notag
    \end{align}
    Applying Lemma 5.2 to $||I_H\tilde{e}^{n+2}||^2$ and rearranging terms give
    \begin{align}
        \Bigg[||e^{n+2}||^2-||e^{n+1}||^2+&||2e^{n+2}-e^{n+1}||^2-||2e^{n+1}-e^n||^2+||e^{n+2}-2e^{n+1}+e^n||^2\Bigg] \notag  \\
        +\frac{2}{3}\Delta t\chi \Bigg[ ||I_He^{n+2}||^2& - ||I_He^{n+1}||^2 + ||2I_He^{n+2}-I_He^{n+1}||^2 - ||2I_He^{n+1}-I_He^n||^2 \notag \\
        &+||I_He^{n+2}-2I_He^{n+1}+I_He^n||^2\Bigg]  \\
        +6||\Tilde{e}^{n+2}-e^{n+2}||^2 &+4\Delta t\Bigg(\chi-\frac{U}{l}(1+\frac{2}{3}\Delta t \chi)^2\Bigg)||I_He^{n+2}||^2 \notag \\
        +\Delta t\Bigg(\frac{7}{2}\nu -4\frac{U}{l}C^2&H^2\Bigg) ||\nabla \Tilde{e}^{n+2}||^2 \leq \frac{8}{\nu}C\Delta t^4\int_{t^n}^{t^{n+2}}||u_{ttt}||_{-1}^2dt. \notag
    \end{align}
    Taking the sum from $n = 0$ to $N-2$ completes the proof.
\end{proof}

Remark: The conditions on $\Delta t$ and $\chi$ are derived from the fact that $\chi-\frac{U}{l}(1+\frac{2}{3}\Delta t\chi)^2$ needs to be greater or equal to zero. The derivation goes as follows. Let $A=\frac{U}{l}$ and $B=\frac{2}{3}\Delta t$. Then 
\begin{align}
    \chi-\frac{U}{l}(1+\frac{2}{3}\Delta t\chi)^2\geq 0 \iff \chi -A(1+B\chi)^2\geq 0. \notag
\end{align}
Expanding this quadratic expression, we get
\begin{align}
    AB^2\chi^2+(2AB-1)\chi+A\leq 0. \notag
\end{align}
The discriminant is $(2AB-1)^2-4A^2B^2$, and it must be greater than or equal to zero for the quadratic expression to have real roots. Then expanding the discriminant, we have
\begin{align}
    4A^2B^2-4AB+1-4A^2B^2\geq 0. \notag
\end{align}
This implies $AB\leq \frac{1}{4}$ and $B\leq \frac{1}{4A}$. This means $\frac{2}{3}\Delta t\leq \frac{l}{4U}$. Thus, we have $\Delta t\leq \frac{3}{8}\frac{l}{U}$. To make the $\Delta t$ condition dimensionless, we let $T^*=\frac{L}{U}$ and rearrange the inequality.

\begin{corollary}
    Consider forecast step in (1.3) and analysis step in (1.5). Suppose $I_H$ is an $L^2$ projection. Assume 
    \begin{align}
        \nu-\frac{8}{7}\frac{U}{l}C^2H^2>0 \text{, equivalently, } Re=\frac{UL}{\nu}<\frac{7}{8}\frac{lL}{C^2H^2}. \notag
    \end{align}
    Define $T^*=\frac{L}{U}$. Assume
    \begin{align}
    \frac{\Delta t}{T^*} \leq \frac{3}{8}\frac{l}{L}. \notag
    \end{align}
    If $\chi\in [\chi_1,\chi_2]\cap(0,\infty)$, where $\chi_1,\chi_2$ are the roots of $\chi-\frac{U}{l}(1+\frac{2}{3}\Delta t\chi)^2$, then the following holds:
    \begin{align}
        ||e^N||^2\leq C\Delta t^4\int_{t^0}^{t^{N}}||u_{ttt}||_{-1}^2 + 
        \text{initial errors}.
    \end{align}
\end{corollary}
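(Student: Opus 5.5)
This corollary follows from Theorem 5.5 in the same way that Corollary 5.5 followed from Theorem 5.4: we discard nonnegative terms on the left-hand side of (5.33). The plan is as follows.

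First, we invoke Theorem 5.5 under the stated hypotheses. These are the Reynolds number restriction $\nu-\frac{8}{7}\frac{U}{l}C^2H^2>0$, the time step restriction $\frac{\Delta t}{T^*}\leq \frac{3}{8}\frac{l}{L}$, and $\chi\in[\chi_1,\chi_2]\cap(0,\infty)$. This gives the summed inequality (5.33).

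Next, we check that every term on the left-hand side of (5.33) is nonnegative. The squared norms and the terms with coefficient $\frac{2}{3}\Delta t\chi>0$ are nonnegative trivially. The viscous term has coefficient $\Delta t(\frac{7}{2}\nu-4\frac{U}{l}C^2H^2)$. Multiplying the assumption $\nu-\frac{8}{7}\frac{U}{l}C^2H^2>0$ by $\frac{7}{2}$ gives $\frac{7}{2}\nu-4\frac{U}{l}C^2H^2>0$, so this coefficient is positive.

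The only step requiring care is the nudging term, whose coefficient is $4\Delta t(\chi-\frac{U}{l}(1+\frac{2}{3}\Delta t\chi)^2)$. By the Remark after Theorem 5.5, write $A=U/l$ and $B=\frac{2}{3}\Delta t$. The condition $\Delta t\leq\frac{3}{8}\frac{l}{U}$, which is equivalent to the assumed $\frac{\Delta t}{T^*}\leq\frac38\frac lL$, gives $AB\leq\frac14$. Hence the discriminant $1-4AB$ of $AB^2\chi^2+(2AB-1)\chi+A$ is nonnegative, so the roots $\chi_1\leq\chi_2$ are real. Since this upward-opening quadratic is nonpositive exactly on $[\chi_1,\chi_2]$, we get $\chi-A(1+B\chi)^2\geq0$ there, so the coefficient is nonnegative. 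Both roots are in fact positive, because their product is $1/B^2>0$ and their sum is $(1-2AB)/(AB^2)>0$. Therefore intersecting with $(0,\infty)$ leaves a nonempty interval.

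Finally, we drop every left-hand side term except $\|e^N\|^2$. We absorb $\frac{8}{\nu}C$ into a generic constant $C$, and we collect $\|e^1\|^2+\|2e^1-e^0\|^2+\frac{2}{3}\Delta t\chi\|I_He^1\|^2+\frac23\Delta t\chi\|2I_He^1-I_He^0\|^2$ as the initial errors. This yields the claimed bound.
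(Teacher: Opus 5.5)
Your proposal is correct and matches the paper's proof: start from (5.33) in Theorem 5.5, note that the viscosity, time-step and $\chi$ hypotheses make every left-hand term nonnegative, and drop all of them except $\|e^N\|^2$. Your explicit check that $\chi-\frac{U}{l}(1+\frac{2}{3}\Delta t\chi)^2\ge 0$ on $[\chi_1,\chi_2]$, with both roots real and positive when $\Delta t\le\frac{3}{8}\frac{l}{U}$, spells out what the paper leaves to the Remark after Theorem 5.5.
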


\begin{proof}
    We begin with (5.33). By assumption, we have $\nu-\frac{8}{7}\frac{U}{l}C^2H^2>0 \text{ and }\Delta t\leq \frac{3}{8}\frac{l}{U}$. Then we can drop all LHS terms that are nonnegative and keep $||e^N||^2$. On the RHS, we have the integral term and initial errors.
\end{proof}
This corollary says that the error does not grow over time as long as the conditions on the viscosity and time step are satisfied: $\nu-\frac{8}{7}\frac{U}{l}C^2H^2>0 \text{ and }\Delta t\leq \frac{3}{8}\frac{l}{U}$.

\section{Numerical Experiments}
  In this section, we present three numerical tests. Through simulations, we test the convergence and accuracy of the BDF2, modular nudging algorithm. For all tests, we implement the explicit form (3.8) of the analysis step.
Recall that the explicit analysis step holds if $I_H=I_H^2$ but may lose accuracy if that condition is not satisfied. An alternative, more expensive implementation is a direct solve for the analysis step to avoid losing accuracy.
\subsection{Experiment 1: Temporal Convergence Test}
The first test comes from Cibik, Fang, Layton, and Siddiqua \cite{Cibik:2025}. We verify the temporal convergence rate of modular, 2-step nudging with BDF2 time discretization by considering an analytical solution on unit square domain $\Omega=(0,1)^2$. The analytical velocity and pressure are
\begin{align}
    u(x,y,t)=e^t(\cos y, \sin x)^T \text{ and } p(x,y,t)=(x-y)(1+t). \notag
\end{align}
The forcing term $f(x,t)$ is computed using the analytical solutions. To minimize spatial errors, we generate a fine mesh with 43,266 degrees of freedom (dof). We assume $I_H$ is $L^2$ projection onto a continuous piecewise quadratic velocity space. We choose Scott-Vogelius $(P2-P1dc)$ finite element pair with a barycenter refined mesh. The results are recorded for errors in $L^2(\Omega)$ norm, convergence rates, and CPU runtime in seconds. The convergence rate is calculated using the formula
\begin{align}
    \text{rate}=\frac{\log(e_{\Delta t_1}/e_{\Delta t_2})}{\log(\Delta t_1/\Delta t_2)}. \notag
\end{align}
Further, we compare with standard nudging with BDF2 time discretization. We set $\chi=1$. The numerical results for final time T=4 are as follows:

\begin{table}[H]
\centering
\begin{tabular}{||c|cc|cc||}
\toprule
 & \multicolumn{2}{c|}{2-step nudging}
 & \multicolumn{2}{c||}{Standard nudging} \\
\cmidrule(lr){2-3} \cmidrule(lr){4-5}
$\Delta t$
& $\dfrac{\|u-v\|}{\|u\|}$ & rate
& $\dfrac{\|u-v\|}{\|u\|}$ & rate \\
\midrule
1     & 2.15e-4 & -- & 2.66e-4 & -- \\
1/2   & 6.89e-5 & 1.64 & 7.23e-5 & 1.88 \\
1/4   & 1.96e-5 & 1.81 & 1.96e-5 & 1.87 \\
1/8   & 5.25e-6 & 1.90 & 5.21e-6 & 1.91 \\
1/16  & 1.35e-6 & 1.95 & 1.34e-6 & 1.95 \\
1/32  & 3.64e-7 & 1.89 & 3.63e-7 & 1.89 \\
\bottomrule
\end{tabular}
\caption{We observe second-order temporal convergence for modular, 2-step nudging.}
\end{table}

As shown in Table 1, we observe second-order convergence rate for modular, 2-step nudging. Standard nudging solves one coupled system at each time step, while modular nudging decouples the system into two cheaper steps. Modular nudging has second-order temporal accuracy and preserves stability.

To illustrate the reduction in computational complexity of modular nudging, we run the simulations for $\chi=1$ and $\chi=10^4$ and compare the final CPU runtime.

\begin{table}[H]
\centering
\begin{tabular}{||c|c|c||}
\toprule
 & \multicolumn{1}{c|}{2-step nudging}
 & \multicolumn{1}{c||}{Standard nudging} \\
\cmidrule(lr){2-2} \cmidrule(lr){3-3}
$\Delta t$
& CPU time (s)
& CPU time (s) \\
\midrule
1     & 4.61 & 11.66 \\
1/2   & 10.63 & 23.65 \\
1/4   & 22.57 & 47.79 \\
1/8   & 45.99 & 92.46 \\
1/16  & 93.33 & 179.76 \\
1/32  & 189.09 & 366.11 \\
\bottomrule
\end{tabular}
\caption{($\chi=1$) Modular
nudging has lower computational complexity than standard nudging.}
\end{table}

\begin{table}[H]
\centering
\begin{tabular}{||c|c|c||}
\toprule
 & \multicolumn{1}{c|}{2-step nudging}
 & \multicolumn{1}{c||}{Standard nudging} \\
\cmidrule(lr){2-2} \cmidrule(lr){3-3}
$\Delta t$
& CPU time (s)
& CPU time (s) \\
\midrule
1     & 4.46 & 14.41 \\
1/2   & 10.42 & 28.69 \\
1/4   & 22.43 & 58.92 \\
1/8   & 46.08 & 119.35 \\
1/16  & 93.97 & 229.90 \\
1/32  & 187.92 & 460.16 \\
\bottomrule
\end{tabular}
\caption{($\chi=10^4$) Modular
nudging has lower computational complexity than standard nudging.}
\end{table}

Table 2 and Table 3 show the CPU runtime in seconds of computing numerical solutions for modular and standard nudging at the final time $T=4$. The recorded CPU runtime serves as a representative measure for runtime varies by program execution. For all time steps, modular nudging takes roughly half the time needed to compute the solution than standard nudging. This suggests modular nudging reduces computational complexity. The results are computed using FreeFEM software, but CPU runtime may be similar for other software, i.e, NGSolve. 

\subsection{Experiment 2: Flow between offset cylinders}
The second test is adapted from Cibik, Fang, and Layton \cite{Fang:2025}. We evaluate the performance of modular, 2-step nudging with a complex 2d flow at a higher Reynolds number without exact solutions. We consider performance comparison with standard nudging and without nudging. The domain is a disk containing a smaller, off-centered obstacle. The disk remains fixed in place. Let $r_1=1$ be the outer circle radius and $r_2=0.1$ be the inner circle radius with center at $c=(1/2,0)$. Formally, the domain is
\begin{align}
    \Omega=\{(x,y):x^2+y^2\leq r_1^2 \text{ and } (x-c_1)^2+(y-c_2)^2\geq r_2^2\}. \notag
\end{align}
We assume no-slip boundary conditions, and the flow is driven by a rotational force
\begin{align}
    f(x,y,t)=(-4y\min(1,t)(1-x^2-y^2), 4x\min(1,t)(1-x^2-y^2))^T. \notag
\end{align}

Using the Delaunay algorithm, we use a mesh that has 75 mesh points on the outer boundary and 60 mesh points on the inner boundary. We set final time $T=25$, time step size $\Delta t=\frac{1}{100}, \nu=10^{-3}, L=1, U=1,$ and $Re=\frac{LU}{\nu}$. The initial conditions are $u(x,y,0)=0$ and $v_0=I_H(u_0)$. We impose the Dirichlet boundary condition $u=0$ on $\partial \Omega$.

We apply Taylor-Hood $(P2-P1)$ finite element pair for the velocity and pressure spaces. We treat the nonlinear convection term using a skew-symmetric formulation with divergence correction that takes the form
\begin{align}
    (v^{n+1}\cdot \nabla \Tilde{v}^{n+2}, w)+\frac{1}{2}((\nabla \cdot v^{n+1})\Tilde{v}^{n+2}, w), \text{ $\forall w \in X^H$}. \notag
\end{align}

The true solution $u$ is approximated using Direct Numerical Simulation (DNS), calculated on a finer mesh with 120 mesh points on the outer boundary and 96 mesh points on the inner boundary. The DNS solution uses the BDF2 time discretization scheme. The nonlinear term is treated using the skew-symmetric formulation with divergence correction. We set $I_H$ to be the $L^2$ projection and set $\chi=1$ and $10^4$.

Figures are plotted starting the second step to exclude the initial error being undefined on log scale.

\begin{figure}[H]
    \centering
    \includegraphics[width=1\linewidth]{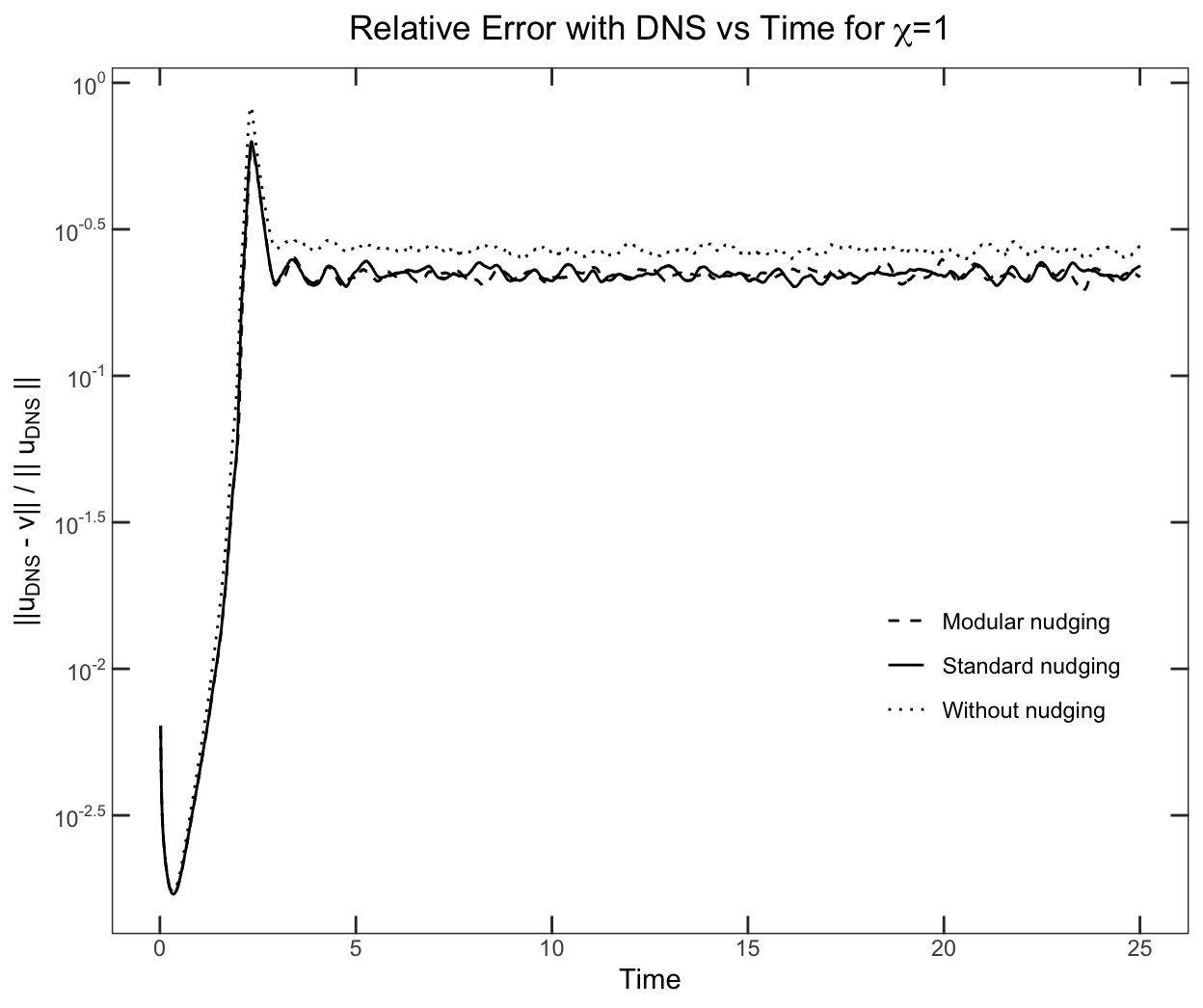}
    \caption{For $\chi=1$, modular nudging performs similar to standard nudging but better than without nudging.}
    \label{fig:Test2X=1}
\end{figure}

For $\chi=1$, the relative errors for modular, standard, and without nudging  decrease to around $\mathcal{O}(10^{-2.5})$ from $t=0$ to $t=1$. After $t=1$, the errors sharply increase and saturate at around $\mathcal{O}(10^{-0.5})$. We observe comparable longtime performance for modular and standard nudging, with both performing better than without nudging. We conjecture that the sharp growth in relative error at $t\approx1$ is due to $H$-condition not being satisfied.

\begin{figure}[H]
    \centering
    \includegraphics[width=1\linewidth]{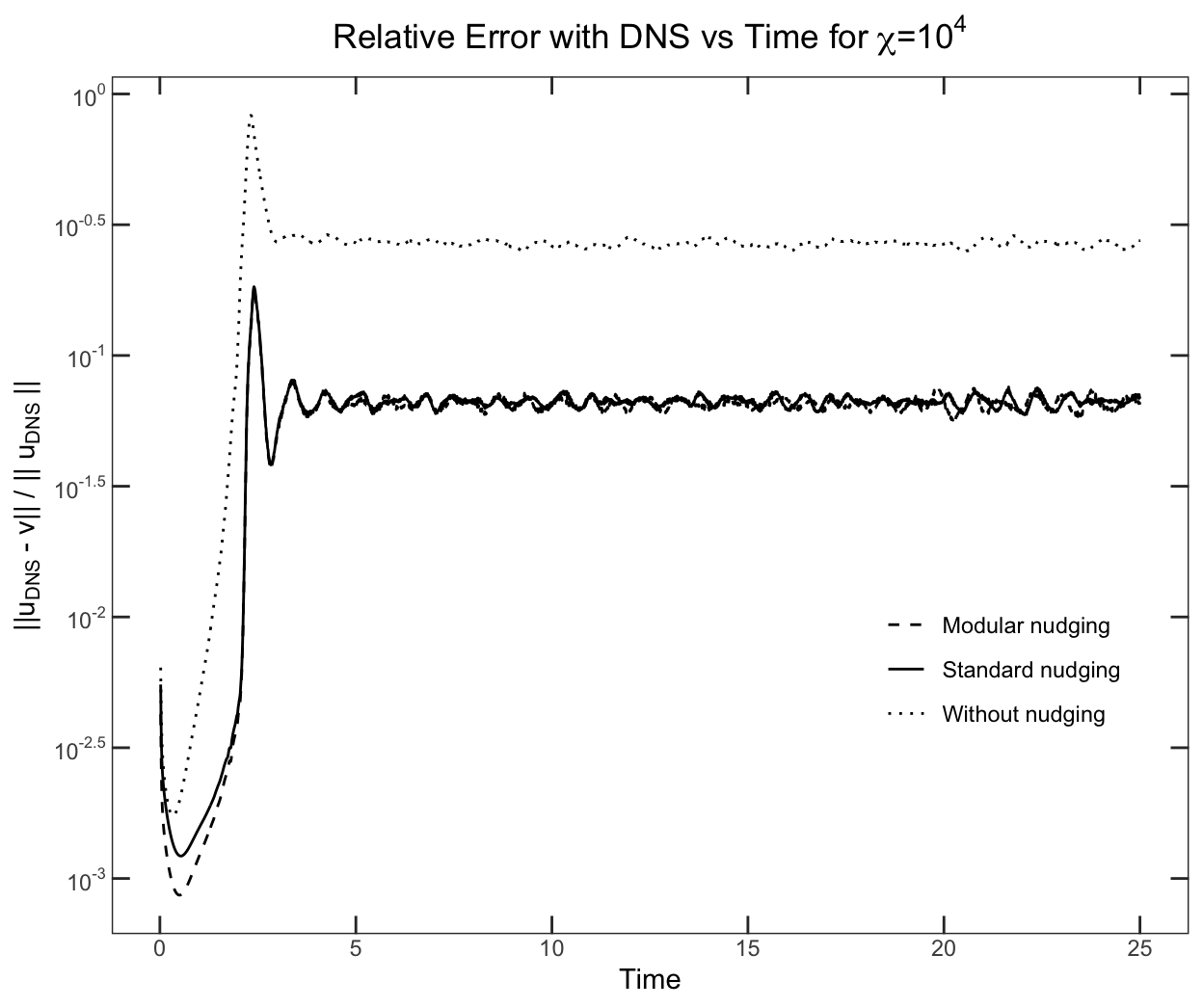}
    \caption{For $\chi=10^4$, modular nudging performs better than standard nudging in short time but similar in long time, both performing better than without nudging.}
    \label{fig:Test2X=1e4}
\end{figure}

For $\chi=10^4$, the relative error for modular, standard, and without nudging decrease to $\mathcal{O}(10^{-3})$ from $t=0$ to $t=1$, with modular nudging performing the best. After $t=1$, errors for modular and standard nudging increase and saturate at around $\mathcal{O}(10^{-1})$. Without nudging, the errors increase and saturate at around $\mathcal{O}(10^{-0.5})$ after $t=1$. For this test, modular nudging performed the best in short time and performed similar to standard nudging in long time, while without nudging performed the worst. 

Compared with results for modular nudging with backward Euler (BE) method for $\chi=1$ in the referenced test \cite{Fang:2025}, we observe comparable long-time performance and better short-time performance when using the BDF2 method. For $\chi=10^4$, the BDF2 method performs better in short time but has bigger errors than BE in long time. This observation is expected because dissipation controls short-time errors, and convection controls long-time errors. We expect the BDF2 scheme to be better than BE scheme in short time, but BE being better than BDF2 in long time. Also, in the referenced test, the errors for without nudging grow to $\mathcal{O}(1)$ over time. In our test, the errors for without nudging do not grow at that rate. This may be due to the difference in initial conditions. The referenced test began the simulation with non-zero approximate velocity.

\subsection{Experiment 3: Pipe cavity flow problem}
This experiment is adapted from Ervin, Layton, and Maubach \cite{Layton:2000}. We test the performance of modular nudging on a more interesting, complex domain with high Reynolds number. We make comparisons with standard and without nudging. The domain, depicted in Figure 3, is a horizontal pipe of length 8 and height 1 with a rectangular cavity on top of the pipe of length 6 and height 5. Formally, the domain is 
\begin{align}
    \Omega = ([0,8]\times [0,1])\cup ([1,7]\times [1,6]). \notag
\end{align}
We assume no-slip boundary conditions on the walls and no force $f(x,y,t)=0$. We have inflow of fluid from the left end of the pipe and flows out at the right end, both with velocity $u(x,y)=\min(t,1)(4y(1-y))^T$. Because we are merely testing for accuracy, we choose simple inflow and outflow conditions. More complex inflow and outflow conditions may be needed in order to match experimental data \cite{Layton:2000}.

The solution is computed on a mesh with 10,433 dof. We set the final time $T=10$, time step size $\Delta t=\frac{1}{100}, \nu=10^{-3}, L=1, U=1$, and $Re=\frac{LU}{\nu}$. The initial velocity conditions are $u(x,y,0)=0$ and $v_0=I_H(u_0)$. We assume $I_H$ is $L^2$ projection and set $\chi=1$ and $10^4$.

We use Taylor-Hood $(P2-P1)$ finite element pair for the velocity and pressure spaces.
The nonlinear convection term is treated using a skew-symmetric formulation with divergence correction that takes the form
\begin{align}
    (v^{n+1}\cdot \nabla \Tilde{v}^{n+2}, w)+\frac{1}{2}((\nabla \cdot v^{n+1})\Tilde{v}^{n+2}, w), \text{ $\forall w \in X^H$}. \notag
\end{align}

The true solution $u$ is approximated using DNS, computed on a finer mesh with 41,194 dof. The DNS solution uses the BDF2 time discretization scheme, and the nonlinear term is treated exactly as above using the skew-symmetric formulation with divergence correction.

\begin{figure}[H]
    \centering
    \includegraphics[width=1\linewidth]{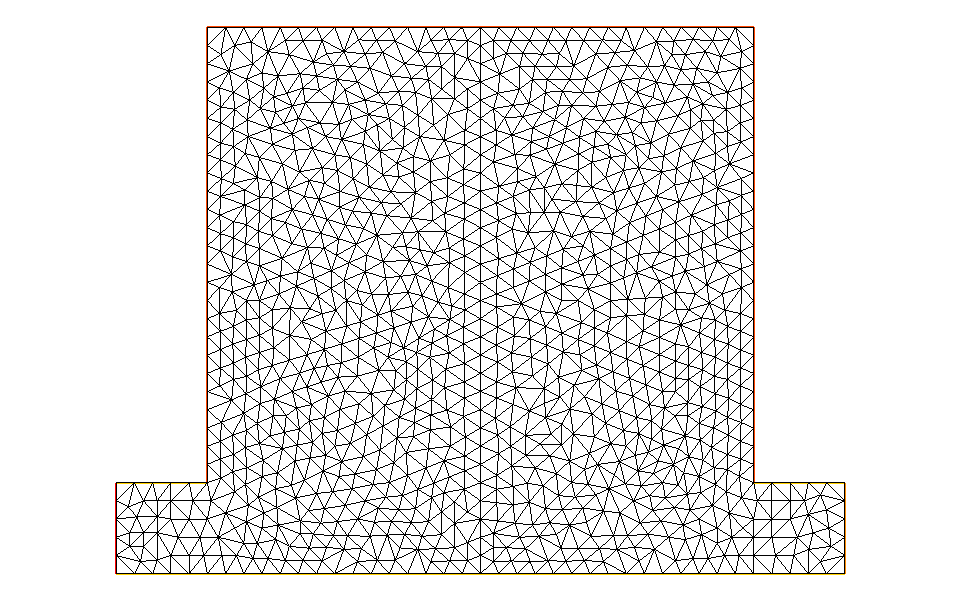}
    \caption{Pipe cavity domain.}
    \label{fig:Test3domain}
\end{figure}

Figures are plotted starting the second step to exclude the initial error being undefined on log scale. The numerical results are as follows.

\begin{figure}[H]
    \centering
    \includegraphics[width=1\linewidth]{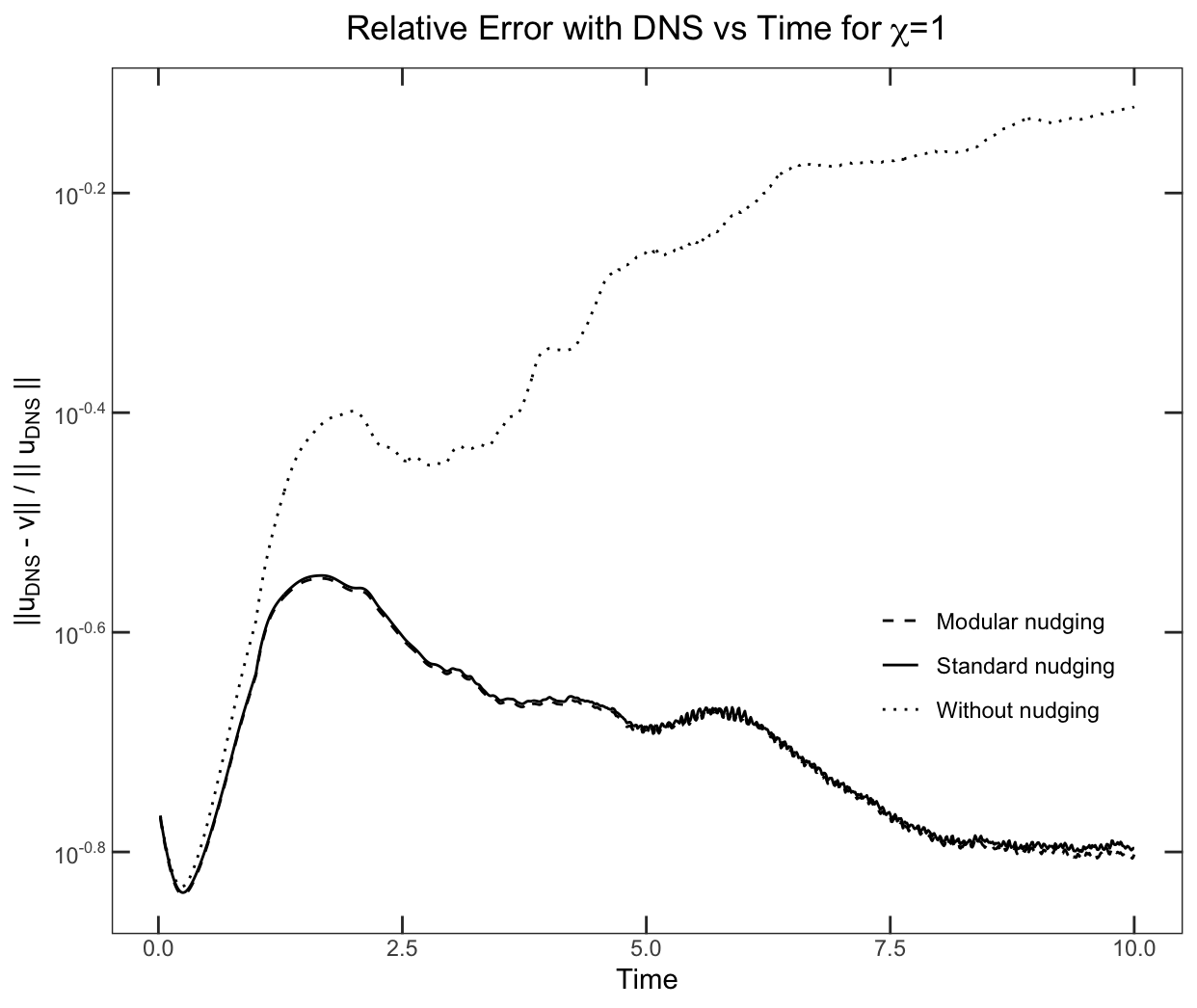}
    \caption{For $\chi=1$, modular nudging shares similar performance as standard nudging, both performing much better than without nudging.}
    \label{fig:Test3X=1}
\end{figure}

For $\chi=1$, the relative errors for the modular and standard nudging grow to $\mathcal{O}(10^{-0.6})$ at $t=2$ and decrease afterwards. The errors for both methods saturate around $\mathcal{O}(10^{-0.8})$. The relative error without nudging steadily increases to $\mathcal{O}(1)$. Modular and standard nudging have similar accuracy to each other. Without nudging performs much worse.

\begin{figure}[H]
    \centering
    \includegraphics[width=1\linewidth]{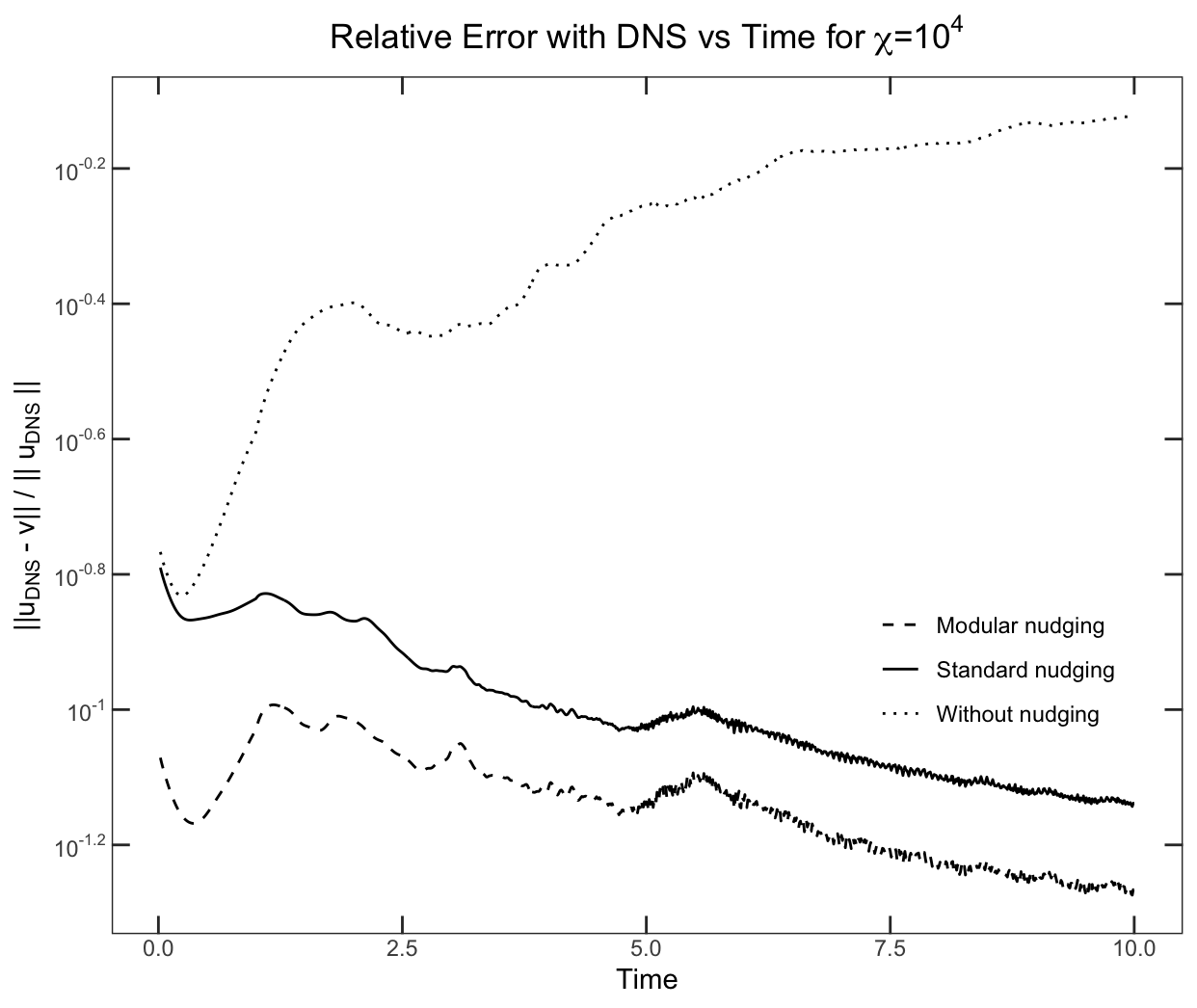}
    \caption{For $\chi=10^4$, modular nudging performs better than standard nudging and much better than without nudging.}
    \label{fig:Test2X=1e4}
\end{figure}

For $\chi=10^4$, the relative errors for the modular and standard nudging quickly decrease after after initial calculations. The errors for both methods saturate around $\mathcal{O}(10^{-1})$. The relative error without nudging steadily increases to $\mathcal{O}(1)$. In longtime, modular nudging performs better than standard nudging with no nudging having much larger errors.

\subsubsection{Test with do-nothing outflow}
In this test, we alter the outflow condition to "do-nothing" as proposed by Volker John on page 21 of \cite{John:2016}. We keep the same inflow condition and do not impose any condition on the outflow. We investigate the effect of different outflow condition on the error. The solution without nudging blows up in this test, so relative error is computed relative to the coarse solution for better view of finer scale comparisons.

\begin{figure}[H]
    \centering
    \includegraphics[width=1\linewidth]{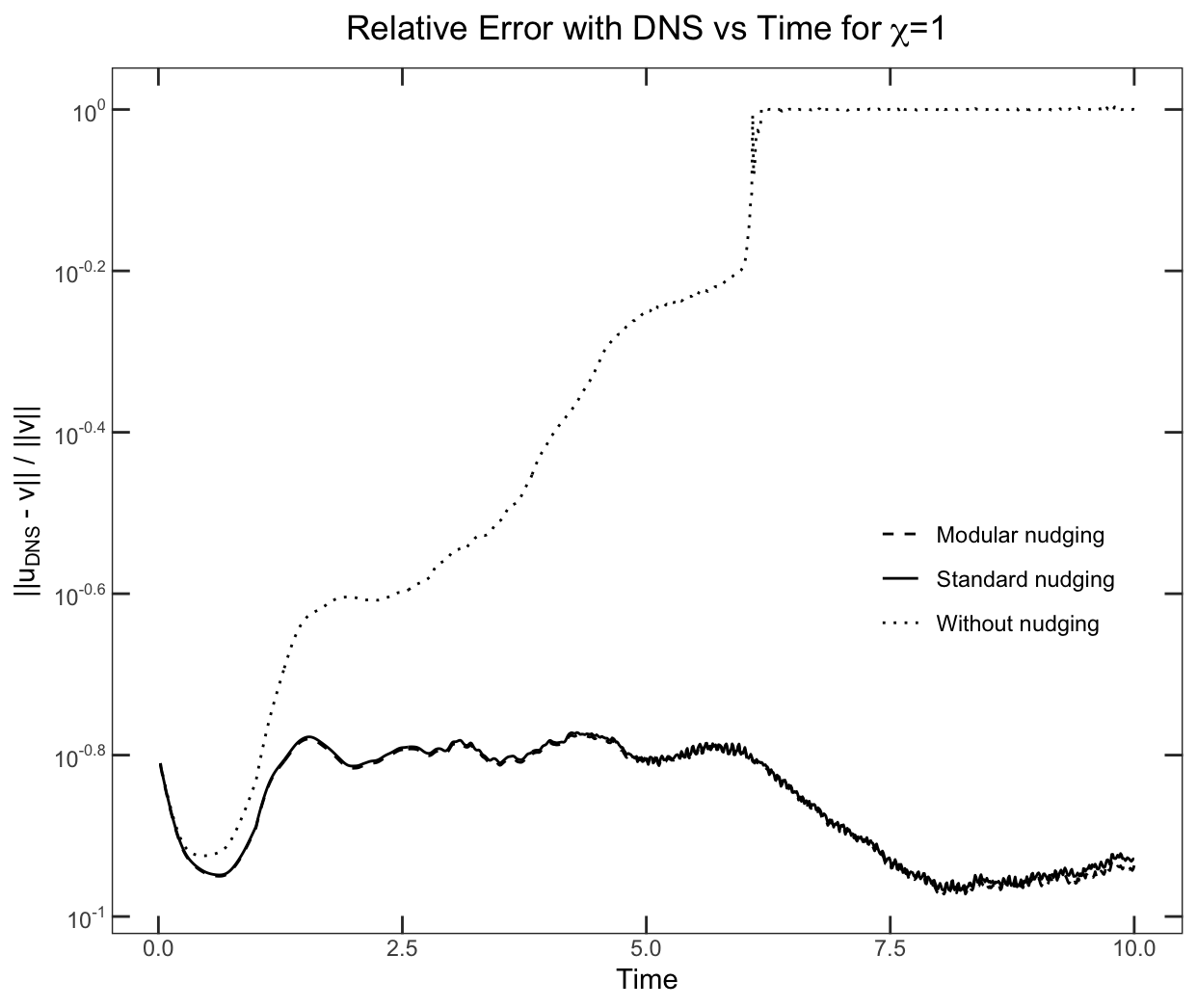}
    \caption{For $\chi=1$, modular nudging performs similar to standard nudging with both performing better than without nudging.}
    \label{fig:Test2X=1DN}
\end{figure}

For $\chi=1$, the relative errors for the modular and standard nudging hover around $\mathcal{O}(10^{-0.8})$ and saturate at $\mathcal{O}(10^{-1})$. The relative error for without nudging grows to $\mathcal{O}(1)$. Modular and standard nudging has comparable performance, both being better than without nudging.

\begin{figure}[H]
    \centering
    \includegraphics[width=1\linewidth]{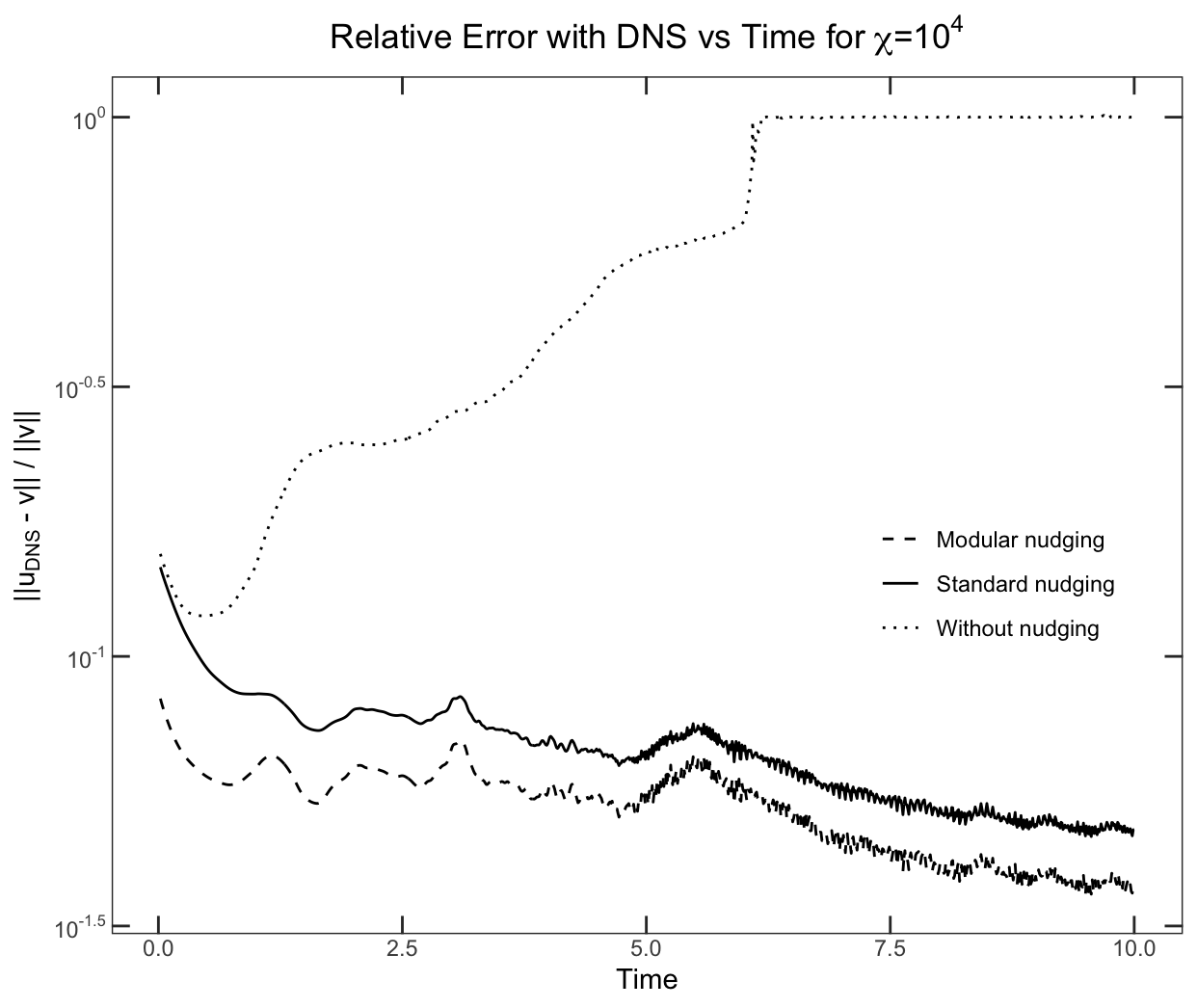}
    \caption{For $\chi=10^4$, modular nudging performs better than standard nudging and much better than without nudging.}
    \label{fig:Test2X=1e4DN}
\end{figure}

For $\chi=10^4$, the relative errors for the modular and standard nudging decrease steadily and saturate at around $\mathcal{O}(10^{-1.5})$. The relative error for without nudging quickly increases to $\mathcal{O}(1)$. In longtime, modular nudging has smaller errors than standard nudging, and the errors grow for without nudging.

In comparison to the errors from imposing the same conditions for inflow and outflow, the errors with the "do-nothing" condition are smaller. This observation holds for both $\chi=1$ and $10^4$. Although the error difference is minor, we do observe smaller error when imposing the "do-nothing" outflow condition.
  
\section{Conclusion and Future Prospects}
  This report analyzed and tested the BDF2 modular, 2-step nudging algorithm. If $I_H=I_H^2$, the analysis step can be explicit, while if $I_H\neq I_H^2$ explicit use of the derived formula adds a consistency error term. The numerical tests confirm the effectiveness of the method. 
We proved stability and long-time accuracy of the approximate solution. Numerical experiments support the observation that modular nudging is computationally cheaper than standard nudging while providing comparable performance. Future directions  include development of modular nudging with other time discretization schemes, resolving the complications with the alternate analysis step, and self-adapting the nudging parameter $\chi$.

\section{Appendix}
    \appendix
\section{Discussion about an alternate analysis step}
This section analyzes an alternate analysis step and discusses the bottlenecks encountered when attempting to prove stability and error estimates for it. The alternate analysis step takes the form:
\begin{align}
    \text{Forecast step: } &\frac{3\Tilde{v}^{n+2}-4v^{n+1}+v^n}{2\Delta t}+\tilde{v}^{n+2} \cdot \nabla \Tilde{v}^{n+2} - \nu \Delta \Tilde{v}^{n+2} + \nabla q^{n+2}=f(x) \\ &\nabla \cdot \Tilde{v}^{n+2} = 0 \\
    \text{Analysis step: }
    &\frac{3v^{n+2}-3\Tilde{v}^{n+2}}{2\Delta t}-\chi I_H(u(t^{n+2})-v^{n+2}) \\
    & - \nu \Delta (v^{n+2}-\Tilde{v}^{n+2}) - \nabla \cdot (\nu_{turb}(\Tilde{v}^{n+2})\nabla v^{n+2})=0. \notag
\end{align}

We now analyze the stability of this method. For the alternate analysis step, we have the following algorithm
    \begin{align}
    \text{Forecast step: } 
    &\text{Given } v^n, v^{n+1} \in X_h, \text{ find } (\Tilde{v}^{n+2}, q^{n+2}) \in (X_h, Q_h) \text{ satisfying:} \notag \\ 
    &\Bigg(\frac{3\Tilde{v}^{n+2}-4v^{n+1}+v^n}{2\Delta t}, v_h\Bigg)+b(\Tilde{v}^{n+2}, \Tilde{v}^{n+2}, v_h) + \nu (\nabla \Tilde{v}^{n+2}, \nabla v_h) \\
    &- (q^{n+2}, \nabla \cdot v_h)=(f^{n+2},v_h) \text{ } \forall v_h \in X_h, \notag \\ 
    &(\nabla \cdot \Tilde{v}^{n+2}, q_h) = 0 \text{ } \forall q_h \in Q_h.\\
    \text{Analysis step: }
    &\Bigg(\frac{3v^{n+2}-3\Tilde{v}^{n+2}}{2\Delta t}, v_h\Bigg) -\chi (I_H(u(t^{n+2})-v^{n+2}), v_h) \notag \\
    &+ \nu (\nabla (v^{n+2}-\Tilde{v}^{n+2}),\nabla v_h) - (\nabla \cdot (\nu_{turb}(\Tilde{v}^{n+2})\nabla v^{n+2}), v_h)=0 \text{ } \forall v_h \in X_h.
    \end{align}

As proceeded in stability analysis of the analysis step in section 4, we set $v_h=\Tilde{v}^{n+2}$ in (A.4) and $q_h=q^{n+2}$ in (A.5). Adding these two equations gives
\begin{align}
    &\Bigg(\frac{3\Tilde{v}^{n+2}-4v^{n+1}+v^n}{2\Delta t}, \Tilde{v}^{n+2}\Bigg) + \nu ||\nabla \Tilde{v}^{n+2}||^2 =(f^{n+2},\Tilde{v}^{n+2}).
\end{align}
We rewrite the first term as follows
\begin{align}
    \Bigg(\frac{3\Tilde{v}^{n+2}-4v^{n+1}+v^n}{2\Delta t}, \Tilde{v}^{n+2}\Bigg)=
    &\Bigg(\frac{3v^{n+2}-4v^{n+1}+v^n}{2\Delta t}, v^{n+2}\Bigg) + \Bigg(\frac{3\Tilde{v}^{n+2}-3v^{n+2}}{2\Delta t}, \Tilde{v}^{n+2}\Bigg) \notag \\
    &+\Bigg(\frac{3\Tilde{v}^{n+2}-4v^{n+1}+v^n}{2\Delta t}, \Tilde{v}^{n+2}-v^{n+2}\Bigg).
\end{align}
For the first term on the RHS in (A.8), we use Lemma 2.1. We apply the polarization identity to the second term. The third term is a problematic term. We rely on an identity deriving from the alternate analysis step.
If we rewrite the alternate analysis step as
\begin{align}
    &\Bigg(\frac{1}{2\Delta t}I-\nu \Delta\Bigg)(v^{n+2}-\Tilde{v}^{n+2})-\chi I_H(u^{n+2}-v^{n+2})=0 \\
    \implies& v^{n+2}-\tilde{v}^{n+2} = \Bigg(\frac{1}{2\Delta t}I-\nu \Delta\Bigg)^{-1}\chi I_H(u^{n+2}-v^{n+2}) \\
    \implies& \tilde{v}^{n+2}-v^{n+2} = \Bigg(\frac{1}{2\Delta t}I-\nu \Delta\Bigg)^{-1}\chi I_H(v^{n+2}-u^{n+2}).
\end{align}
We treat $\Psi:=(\frac{1}{2\Delta t}I-\nu \Delta)^{-1}$ as an operator. Then taking the inner product of (A.11) with $\frac{1}{2\Delta t}(3v^{n+2}-4v^{n+1}+v^n)$, we get
\begin{align}
    &\frac{1}{2\Delta t}(3v^{n+2}-4v^{n+1}+v^n, \tilde{v}^{n+2}-v^{n+2}) \notag \\ 
    =&\frac{1}{2\Delta t} (3v^{n+2}-4v^{n+1}+v^n, \Psi \chi I_H(v^{n+2}-u^{n+2})).
\end{align}
At this point, we can split $\Psi$ into $\Psi^{1/2}\Psi^{1/2}$ since $\Psi$ is symmetric positive definite. Then we have
\begin{align}
    &\frac{\chi}{2\Delta t}(3v^{n+2}-4v^{n+1}+v^n, \Psi I_H(v^{n+2}-u^{n+2})) \notag \\
    =&\frac{\chi}{2\Delta t}\Bigg(\Psi^{1/2}(3v^{n+2}-4v^{n+1}+v^n), \Psi^{1/2} I_H(v^{n+2}-u^{n+2})\Bigg).
\end{align}
Now if the operator $\Psi$ and the $L^2$ projection $I_H$ commute, then we apply $I_H$ to the left function on the RHS of (A.13) and get
\begin{align}
    \frac{\chi}{2\Delta t}\Bigg(\Psi^{1/2}I_H(3v^{n+2}-4v^{n+1}+v^n), \Psi^{1/2} I_H(v^{n+2}-u^{n+2})\Bigg).
\end{align}
From here, we can add and subtract $u^{n+2},u^{n+1},u^n$ to the left function of (A.13) and obtain
\begin{align}
    &\frac{\chi}{2\Delta t}\Bigg(\Psi^{1/2}I_H(3(v^{n+2}-u^{n+2})-4(v^{n+1}-u^{n+1})+(v^n-u^n)), \Psi^{1/2} I_H(v^{n+2}-u^{n+2})\Bigg) \notag \\
    +&\Bigg(\Psi^{1/2}I_H(3u^{n+2}-4u^{n+1}+u^n),\Psi^{1/2}I_H(v^{n+2}-u^{n+2})\Bigg).
\end{align}
For the first term, we apply Lemma 2.1. For the second term, we move it to the RHS and bound it using Cauchy-Schwarz inequality and Young's inequality. The remaining stability analysis is exactly the same as in section 4. Therefore the difficulty lies in proving the commutativity of the operators $\Psi$ and $I_H$. The error analysis encounters this exact problem.

\newpage

\bibliographystyle{IEEEtran}

\bibliography{mybib}

\end{document}